\appto\appendix{\addtocontents{toc}{\protect\setcounter{tocdepth}{1}}}
\renewcommand\eqref[1]{(\ref{#1})} 
\newtheorem{theorem}{Theorem}[section]
\newtheorem{proposition}[theorem]{Proposition}
\newtheorem{lemma}[theorem]{Lemma}
\newtheorem{corollary}[theorem]{Corollary}
\theoremstyle{definition}
\newtheorem{definition}[theorem]{Definition}
\newtheorem{remark}[theorem]{Remark}
\numberwithin{equation}{section}
\newcommand{\supp}{\operatorname{supp}}
\begin{document}

\title[Fourier multipliers for Hardy spaces on graded Lie groups]{Fourier multipliers for Hardy spaces on graded\\ Lie groups}

\author{Qing Hong}
\address{Qing Hong:
\endgraf
School of Mathematics and Statistics,
 Jiangxi Normal University
 \endgraf
 Nanchang, Jiangxi 330022,  China}
\email{qhong@mail.bnu.edu.cn}

\author{Guorong Hu}
\address{Guorong Hu:
\endgraf
School of Mathematics and Statistics,
 Jiangxi Normal University
 \endgraf Nanchang, Jiangxi 330022, China}
\email{hugr@mail.ustc.edu.cn}

\author{Michael Ruzhansky}
\address{Michael Ruzhansky:
\endgraf
Department of Mathematics
\endgraf
 Analysis, Logic and Discrete Mathematics
\endgraf
 Ghent University
\endgraf
 Krijgslaan 281, Building S8, B 9000 Ghent, Belgium
 \endgraf
and
\endgraf
School of Mathematical Sciences
\endgraf
 Queen Mary University of London
 \endgraf
Mile End Road, London E1 4NS, United Kingdom}
\email{Michael.Ruzhansky@UGent.be}

\subjclass[2010]{43A80, 43A22, 42B30}

\date{\today}


\keywords{Graded nilpotent Lie groups, representations of Lie groups, Fourier multipliers, Hardy spaces}

\thanks{G. Hu and Q. Hong were supported by the NNSF of China (Grant Nos. 11901256 and 12001251)
and the NSF of Jiangxi Province (Grant Nos. 20192BAB211001 and 20202BAB211001).
M. Ruzhansky was supported by the FWO Odysseus 1 grant G.0H94.18N: Analysis and Partial Differential Equations}


\begin{abstract}
In this paper, we investigate the $H^p(G) \rightarrow L^p(G)$, $0< p \leq 1$, boundedness
of multiplier operators defined via group Fourier transform on a graded Lie group $G$,
where $H^p(G)$ is the Hardy space on $G$.
Our main result extends those obtained in [Colloq. Math. \textbf{165} (2021), 1--30],
where the $L^1(G)\rightarrow L^{1,\infty}(G)$ and $L^p(G) \rightarrow L^p(G)$,
$1< p <\infty$, boundedness of such Fourier multiplier operators were proved.
\end{abstract}

\maketitle

\tableofcontents

\section{Introduction}
\allowdisplaybreaks

Many problems in harmonic analysis and partial differential equations are related to the study of
Fourier or spectral multipliers for certain function spaces. We start by recalling the classical Mihlin multiplier theorem.
It  says that if
a function $\sigma (\xi)$ defined on $\mathbb{R}^n \backslash \{0\}$ has continuous derivatives up to
$(\lfloor n/2\rfloor +1)$-th order, and satisfies
\begin{align} \label{mih}
|\partial_{\xi}^{\alpha} \sigma (\xi)| \leq C_\alpha |\xi|^{-|\alpha|}
\end{align}
for all $\xi \in \mathbb{R}^n \backslash \{0\}$ and all multi-indices $\alpha \in \mathbb{N}_0^n$ with length
$|\alpha| \leq \lfloor n/2\rfloor +1$, then the Fourier multiplier operator $T_\sigma$ associated with $\sigma$, initially defined
for $f \in \mathcal{S}(\mathbb{R}^n)$ via
\begin{align*}
T_\sigma f = \mathcal{F}^{-1} (\sigma \widehat{f}),
\end{align*}
extends to a bounded operator on $L^p (\mathbb{R}^n)$ for all $1<p <\infty$.
H\"{o}rmander \cite{Hor} improved this result by showing that the regularity condition on $\sigma(\xi)$
could be allowed to be of fractional order. More precisely he proved that if
$\sigma \in \mathcal{S}'(\mathbb{R}^n)$ satisfies
\begin{align} \label{hor}
\sup_{t >0} \|\eta (\cdot) \sigma (t \cdot)\|_{W^{2}_{s}(\mathbb{R}^n)} <\infty
\end{align}
for some $s > n/2$, where $\eta$ is a function in $C_0^{\infty}(\mathbb{R}^n \backslash \{0\})$ such that $|\eta(\xi)| \geq c >0$
on some annulus $\{r_1 < |\xi| < r_2\}$,
then $T_\sigma$ extends to a bounded operator on $L^p (\mathbb{R}^n)$, for all $1<p <\infty$. Here $W^{2}_s(\mathbb{R}^n)$
denote the Sobolev spaces on $\mathbb{R}^n$.
It is well known that condition \eqref{hor} is weaker than condition \eqref{mih}.
Calder\'{o}n and Torchinsky \cite{CT} extended Mihlin and H\"{o}rmander's multiplier theorem to the case $0< p \leq 1$,
proving that if $\sigma$ satisfies \eqref{hor} for some $s> n(1/p-1/2)$,  then $T_\sigma$ is bounded
on the Hardy space $H^p(\mathbb{R}^n)$.

Multipliers for Lebesgue or Hardy spaces have also been studied extensively in the context of Lie groups.
For spectral multipliers on Lie groups associated to one (or several) opeartors such as a sub-Laplacian,
see, for example, \cite{HS, Christ, MaMe, HZ, A, MaMu1, MaMu2} and the references therein.
Note that the optimality of a Mihlin-H\"{o}rmander condition
in terms of the topological or homogeneous dimensions for spectral multipliers on stratified groups is a very difficult
problem which has so far only been solved in the case of $2$-step \cite{MaMu2,HZ, MS, MaMu1}.
Concerning Fourier multipliers on Lie groups, to our best knowledge, the first work was done by Coifman and Weiss
in \cite{CW1}, where they  studied the Fourier multipliers of $SU(2)$, see also \cite{CW2}. After that,
investigations of Fourier multipliers on compact Lie groups has been focused on the central multipliers
\cite{St, Vr, We1}, untill the appearance of the recent works of the third-named author and Wirth
\cite{RW1,RW2} and Fischer \cite{Fischer}.
The rest of the literature concerning Fourier multipliers on Lie groups is
restricted to the motion group \cite{Rubin} and to the Heisenberg group \cite{DM2, Lin, Bagchi}.

Recently, Fischer and the third-named author \cite{FR1} investigated Fourier multipliers on graded Lie groups.
One of their main results is the following Mihlin-type Fourier multiplier theorem for $L^p$ spaces
on graded Lie groups. (Basic concepts concerning graded Lie groups and representation theory, and the definition of difference operators
will be recalled in Section \ref{pre}.)

\medskip
\noindent
{\bf Theorem A} (see \cite[Theorem 1.1]{FR1}).
{\it Let $G$ be a graded Lie group with homogeneous dimension $Q$.
Let $\sigma =\{\sigma (\pi), \pi \in \widehat{G}\}$ be a measurable field of operators in $L^\infty(\widehat{G})$.
Assume that there exist a positive Rockland operator $\mathcal{R}$ (of homogeneous degree $\nu$)
and an integer $N > Q/2$ divisible by the dilation weights $v_1,\cdots, v_n$ (see Section \ref{pre} for their definition)
such that
\begin{equation} \label{imi1}
\sup_{\pi \in \widehat{G}} \big\| \Delta^{\alpha}\sigma \  \pi(\mathcal{R})^{\frac{[\alpha]}{\nu}}
\big\|_{\mathscr{L}(\mathcal{H}_\pi)} <\infty
\end{equation}
and
\begin{equation} \label{imi2}
\sup_{\pi \in \widehat{G}} \big\|\pi(\mathcal{R})^{\frac{[\alpha]}{\nu}} \ \Delta^{\alpha}\sigma
\big\|_{\mathscr{L}(\mathcal{H}_\pi)} <\infty,
\end{equation}
hold for all $\alpha \in \mathbb{N}_0^n$ with $[\alpha] \leq N$.
Then the Fourier multiplier operator $T_{\sigma}$ defined via
\begin{equation} \label{Fouriermultiplier}
\mathcal{F}_G (T_\sigma f)(\pi) = \sigma(\pi)\widehat{f}(\pi)
\end{equation}
is of weak type
$(1,1)$, and is bounded on $L^p (G)$ for all $1 < p <\infty$.}

\medskip

Examining the proof of Theorem A (given in \cite{FR1}), we find that the condition \eqref{imi1} is sufficient
to give the weak $(1,1)$ estimate of $T_\sigma$. The latter along with the $L^2(G)$-boundedness of $T_\sigma$ (which follows
from the Plancherel theorem) and an interpolation argument yields the $L^p(G)$-boundedness of $T_\sigma$ for $1 < p \leq 2$.
Note that $T^\ast_\sigma = T_{\sigma^\ast}$, where $\sigma^\ast = \{\sigma(\pi)^\ast, \pi \in \widehat{G}\}$, and
that if $\sigma$ satisfies \eqref{imi2} then $\sigma^\ast$ satisfies \eqref{imi1}.
Hence, if $\sigma$ satisfies \eqref{imi1} and \eqref{imi2}, then both $T_\sigma$ and $T_\sigma^\ast$
are bounded on $L^p(G)$ for $1 < p \leq 2$, which implies that $T_\sigma$ is bounded on $L^p(G)$ for all $1 < p < \infty$.

The purpose of the present paper is to extend Theorem A to the case $0< p \leq 1$ by investigating
the $H^p(G) \rightarrow L^p(G)$ boundedness of $T_\sigma$, where $H^p(G)$
is the Hardy space on $G$. Our main results is the following

\begin{theorem} \label{main1}
Let $G$ be a graded Lie group with homogeneous dimension $Q$.
Let $\sigma =\{\sigma (\pi), \pi \in \widehat{G}\}$ be a measurable field of operators in $L^\infty(\widehat{G})$.
Let $0< p \leq 1$.
Assume that there exist a positive Rockland operator $\mathcal{R}$ (of homogeneous degree $\nu$)
and an integer $N > Q(1/p-1/2)$ divisible by the dilation weights $v_1,\cdots, v_n$
such that
\begin{equation*}
\sup_{\pi \in \widehat{G}} \big\| \Delta^{\alpha}\sigma \  \pi(\mathcal{R})^{\frac{[\alpha]}{\nu}}
\big\|_{\mathscr{L}(\mathcal{H}_\pi)} <\infty
\end{equation*}
holds for all $\alpha \in \mathbb{N}_0^n$ with $[\alpha] \leq N$,
Then the Fourier multiplier operator $T_{\sigma}$ defined by \eqref{Fouriermultiplier} is bounded from $H^p (G)$
to $L^p(G)$.
\end{theorem}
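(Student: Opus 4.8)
The plan is to use the atomic decomposition of $H^p(G)$ together with the Plancherel-based $L^2$ theory, reducing the theorem to a uniform bound on atoms. Recall that $T_\sigma$ is a right-convolution operator, $T_\sigma f = f * \kappa$ with $\kappa = \mathcal{F}_G^{-1}\sigma$, so it commutes with left translations and, since $\sigma \in L^\infty(\widehat{G})$, is bounded on $L^2(G)$ with norm at most $\|\sigma\|_{L^\infty(\widehat{G})}$. By the atomic characterisation of $H^p(G)$ and the $p$-subadditivity of $\|\cdot\|_{L^p}^p$ for $0 < p \leq 1$, it suffices to find a constant $C$, independent of the atom, such that $\|T_\sigma a\|_{L^p(G)} \leq C$ for every $(p,2)$-atom $a$; the a priori convergence needed to pass from atoms to a general $f \in H^p(G) \cap L^2(G)$ is standard. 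By left invariance we may assume that $a$ is supported in a ball $B = B(e,r)$ about the identity, with $\|a\|_{L^2} \leq |B|^{1/2 - 1/p}$ and $\int_G a(y)\, q(y)\, dy = 0$ for every polynomial $q$ of homogeneous degree at most $M := \lfloor Q(1/p - 1)\rfloor$. Over the dilate $B^* = B(e, \gamma r)$, with $\gamma$ the constant in the quasi-triangle inequality for the homogeneous quasi-norm, the local estimate is immediate from H\"older's inequality and the $L^2$-boundedness:
\begin{equation*}
\int_{B^*} |T_\sigma a|^p \leq |B^*|^{1 - p/2}\Big(\int_{B^*} |T_\sigma a|^2\Big)^{p/2} \lesssim |B|^{1 - p/2}\,\|a\|_{L^2}^p \lesssim |B|^{1 - p/2}\,|B|^{p(1/2 - 1/p)} = 1.
\end{equation*}

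The main work, and what I expect to be the chief obstacle, is a quantitative smoothness-decay estimate for $\kappa$ obtained from the hypothesis (the condition \eqref{imi1}, now with $N > Q(1/p - 1/2)$). I would fix $\psi \in C_c^\infty(0,\infty)$ with $\sum_{j \in \mathbb{Z}} \psi(2^{-j\nu}\lambda) = 1$ on $(0,\infty)$ and decompose the symbol through the functional calculus of $\mathcal{R}$, setting $\sigma_j = \sigma\,\psi(2^{-j\nu}\pi(\mathcal{R}))$ and $K_j = \mathcal{F}_G^{-1}\sigma_j$, so that $\kappa = \sum_j K_j$ with $K_j$ concentrated at spatial scale $2^{-j}$. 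Translating the hypothesis to the spatial side via the Plancherel theorem --- using that the difference operators $\Delta^\alpha$ correspond to multiplication by the coordinate functions while $\pi(\mathcal{R})^{[\alpha]/\nu}$ corresponds to the Rockland derivative --- the goal is the weighted bound
\begin{equation*}
\int_G |K_j(x)|^2\,\big(1 + 2^{j}|x|\big)^{2N}\, dx \lesssim 2^{jQ}, \qquad j \in \mathbb{Z},
\end{equation*}
together with its derivative versions $\|X^\beta K_j\|_{L^2(G)} \lesssim 2^{j([\beta] + Q/2)}$ for left-invariant differential operators $X^\beta$ with $[\beta] \leq N$. The delicate points are the precise dictionary between $\Delta^\alpha$, polynomial multiplication and the left-invariant vector fields on the non-commutative group $G$, and the careful tracking of the homogeneous degree $\nu$ of $\mathcal{R}$ through the dyadic scaling.

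With the kernel estimate in hand, the global bound follows by writing $T_\sigma a = \sum_j K_j * a$ and using the pointwise inequality $|\sum_j K_j * a|^p \leq \sum_j |K_j * a|^p$, valid for $0 < p \leq 1$, so that it suffices to sum $\int_{G \setminus B^*} |K_j * a|^p$ over $j$. Here the scales separate into two regimes. For the high-frequency scales $2^{-j} \leq r$, where no cancellation is used, I would decompose $G \setminus B^*$ into the annuli $S_l = \{x : 2^l \gamma r < |x| \leq 2^{l+1}\gamma r\}$, $l \geq 0$, apply the Cauchy--Schwarz inequality in the convolution together with the weighted estimate on each annulus, and follow with H\"older's inequality as in the local step. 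Schematically this yields
\begin{equation*}
\int_{S_l} |K_j * a|^p \lesssim \big(2^{-j}/r\big)^{p(N - Q/2)}\, 2^{\,l(Q - Qp/2 - Np)},
\end{equation*}
so that summing first in $l \geq 0$ and then over $2^{-j} \leq r$ converges precisely when $Q - Qp/2 - Np < 0$, i.e. when $N > Q(1/p - 1/2)$; this is exactly the regime that forces the stated regularity threshold.

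For the low-frequency scales $2^{-j} > r$ I would instead exploit the vanishing moments: writing
\begin{equation*}
K_j * a(x) = \int_B \big[K_j(xy^{-1}) - P_x(y)\big]\, a(y)\, dy,
\end{equation*}
where $P_x$ is the left Taylor polynomial of $y \mapsto K_j(xy^{-1})$ at $e$ of homogeneous degree $M$, the Taylor inequality on graded groups bounds the bracket by $|y|^{M+1}$ times a derivative of $K_j$ of order $M+1$, producing a gain $(2^{\,j}r)^{M+1}$ over the support of $a$. Combining this with the derivative kernel estimate and summing over $2^{-j} > r$ converges because $M + 1 > Q(1/p - 1)$, which holds automatically by the choice of $M$. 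Adding the local, high-frequency and low-frequency contributions gives $\|T_\sigma a\|_{L^p(G)} \lesssim 1$ uniformly in $a$, which completes the proof.
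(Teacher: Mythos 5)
Your proposal follows essentially the same route as the paper's proof: atomic decomposition, a local estimate from the $L^2$-boundedness and H\"older's inequality, a Littlewood--Paley decomposition through the functional calculus of $\mathcal{R}$, weighted $L^2$ kernel estimates derived from the Mihlin condition via the Plancherel theorem, the Leibniz rule for difference operators and Hulanicki's theorem, and a split into the scales $2^{-j}\le r$ (no cancellation) and $2^{-j}>r$ (vanishing moments plus a Taylor expansion on the group). The local and high-frequency parts are correct and agree with the paper's computation.

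There is, however, a genuine gap in the low-frequency regime. Once you commit to decay of order $N$ away from the ball --- which is exactly what your high-frequency annulus bound $2^{\,l(Q-Qp/2-Np)}$ and the weighted kernel estimate with weight $(1+2^{j}|x|)^{2N}$ encode --- the Taylor gain $(2^{j}r)^{M+1}$ combines with the dyadic scaling of the order-$(M+1)$ derivatives of $K_j$ and with that weight to give, for $2^{-j}>r$, a $j$-th term of size $\big((2^{j}r)^{\,M+1+Q/2-N}\big)^{p}$. Since $2^{j}r<1$ here, the series over these $j$ converges precisely when $M+1>N-Q/2$, \emph{not} when $M+1>Q(1/p-1)$. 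The two thresholds agree only in the limiting case $N=Q(1/p-1/2)$; but the hypothesis merely asserts the existence of \emph{some} integer $N>Q(1/p-1/2)$ divisible by all the weights, so $N$ must be a multiple of their least common multiple and may exceed $Q(1/p-1/2)$ by a fixed positive amount, or by much more. With your minimal choice $M=\lfloor Q(1/p-1)\rfloor$ the inequality $M+1>N-Q/2$ can fail and the low-frequency sum diverges. The paper repairs exactly this point by imposing the additional requirement $\frac{Q}{2}+(M+1)-N>0$ (condition \eqref{4688}) when fixing the order of vanishing moments of the atoms --- harmless, since atoms with more moments still yield an atomic decomposition. Alternatively, you could run the H\"older/annulus step in the low-frequency regime with a reduced decay exponent $s\in\big(Q(1/p-1/2),N\big]$ chosen close to the left endpoint (the weight-$N$ kernel estimate dominates the weight-$s$ one), after which $M+1>Q(1/p-1)$ does suffice; but as written the step fails. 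A minor further bookkeeping point: $T_\sigma a=a\ast\kappa$, not $\kappa\ast a$, so the Taylor expansion must be taken in the right variable with right-invariant derivatives, as in Bonfiglioli's formula (Lemma \ref{TI}).
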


Some remarks concerning Theorem \ref{main1} are in order.
\begin{itemize}
\item[(1)] Taking $p =1$ in Theorem \ref{main1}, we have the $H^1(G) \rightarrow L^1(G)$ boundedness of $T_\sigma$,
under the assumption that $\sigma$ satisfies \eqref{imi1} for some integer $N > Q/2$ which is divisible by the dilation
weights $v_1, \cdots, v_n$. Thus (by interpolation) our result also implies the $L^p(G)$-boundedness of $T_\sigma$
stated in Theorem A under the same assumptions.
\item[(2)] In the abelian Euclidean setting, that is, $(\mathbb{R}^n, +)$ with the usual isotropic
dilation with $\mathcal{R}$ being the Laplace operator,
\eqref{imi1} is equivalent to \eqref{imi2}, and each of them reduces to
\eqref{mih}. Indeed,  the Euclidean abelian setting, all the dilations weights $v_1, \cdots, v_n$
are equal to $1$,
and $\pi(\mathcal{R})$ reduces to $|\xi|^2$, where $\xi$ is the
(Fourier) dual variable.
\item[(3)] As we mentioned before, the optimality of the Mihlin-H\"{o}rmander condition for multipliers on Lie groups
is a very deep problem. It is known that on any 2-step stratified group the sufficient and necessary condition
for $L^p$-boundedness of a spectral multiplier $F(\mathcal{\mathcal{L}})$ (where $\mathcal{L}$ is a sub-Laplaican)
is that $F$ satisfies a scale-invariant smoothness condition of order $s > n/2$, where $n$ is the topological dimension
of the group (see \cite{MaMu2}). It is natural to ask whether the condition $N > Q (1/ p -1/2)$ in Theorem \ref{main1} can
be replaced by $N > n(1/p -1/2)$. However, we do not indent to study this problem in the present paper.
\end{itemize}

To prove Theorem \ref{main1} we shall mainly use an atomic decomposition of $H^p(G)$, the Littlewood--Paley decomposition, and a
Taylor formula with integral remainder on homogeneous groups which is due to Bonfiglioli \cite{Bon}.
Hulanicki's theorem will also play an important role in our proof.

 \medskip
This paper is organized as follows. In Section \ref{pre}, we recall basic notions concerning graded Lie groups,
basic representation theory, the group Fourier transform, Rockland operators and
difference operators. In Section \ref{hardyspaces},
we recall some basic facts about Hardy spaces on graded Lie groups, including their atomic decomposition.
The proof of our main theorem will
be given in Section \ref{proofoftheorem}.

 \medskip
{\it Notation.} We use
 $\mathbb{N}_{0}$ to denote the set of all nonnegative integers.
For a nonnegative number $s$, we denote by $\lfloor s \rfloor$ the largest integer less than or equal to $s$.
If $\mathcal{H}_1$ and $\mathcal{H}_2$ are two Hilbert spaces, we denote by $\mathscr{L}(\mathcal{H}_1, \mathcal{H}_2)$
the Banach space of the bounded linear operators from $\mathcal{H}_1$ to $\mathcal{H}_2$.
When $\mathcal{H}_1 = \mathcal{H}_2 = \mathcal{H}$ then we write $\mathscr{L} (\mathcal{H}_1, \mathcal{H}_2) = \mathscr{L}(\mathcal{H})$.
The letter $C$ will denote positive constants, which are independent of the main
variables involved and whose value may vary at every occurrence.
By writing $f \lesssim g$ we mean that $f \leq Cg$. If $f \lesssim g$ and $g \lesssim f$, we also write $f \sim g$.

\section{Preliminaries} \label{pre}

\subsection{Graded Lie groups and their homogeneous structure}
A Lie group $G$ is said to be graded if it is connected and simply connected, and
its Lie algebra $\mathfrak{g}$ is endowed with a vector space decomposition
$\mathfrak{g}=\oplus_{k =1}^{\infty} \mathfrak{g}_k$
(where all but finitely many of the $\mathfrak{g}_{k}$'s are $\{0\}$)
such that $[\mathfrak{g}_{k}, \mathfrak{g}_{k'}]\subset \mathfrak{g}_{k + k'}$ for all
$k, k' \in \mathbb{N}$. Such a group is necessarily nilpotent, and
the exponential map $\exp: \mathfrak{g} \rightarrow  G$ is a diffeomorphism.
Examples of graded Lie groups include the Euclidean space $\mathbb{R}^n$, the Heisenberg group $\mathbb{H}^n$ and,
more generally, all stratified Lie groups.

We choose and fix a basis $\{X_1, \cdots, X_n\}$ of $\mathfrak{g}$, so that it is adapted to the gradation, i.e.,
$\{X_1, \cdots, X_{n_1}\}$ (possibly $\emptyset$) is a basis of $\mathfrak{g}_1$, $\{X_{n_1 +1}, \cdots, X_{n_1 + n_2}\}$
(possibly $\emptyset$) is a basis of $\mathfrak{g}_2$, and so on.
Via the map
\begin{align} \label{iu}
\mathbb{R}^n \ni (x_1, \cdots, x_n) \mapsto \exp(x_1 X_1 + \cdots + x_n X_n) \equiv x\in G,
\end{align}
each point $(x_1, \cdots, x_n) \in \mathbb{R}^n$ is identified with the point $x \in G$.
This map
takes the Lebesgue measure on $\mathbb{R}^n$ to a bi-invariant Haar measure $\mu$ on $G$.
We denote the group identity of $G$ by $e$.

The Lie algebra $\mathfrak{g}$ is equipped with a natural family of dilations $\{\delta_r\}_{r>0}$ which are linear
mappings from $\mathfrak{g}$ to $\mathfrak{g}$ determined by
\begin{align*}
\delta_r X= r^k X \quad \mbox{  for } X \in \mathfrak{g}_k
\end{align*}
For each $j  \in \{1, \cdots, n\}$, let $v_j$ be the unique positive integer such that $X_j \in \mathfrak{g}_{v_j}$.
Then we have $\delta_r X_j = r^{v_j} X_j$, $j =1, \cdots, n$. The associated group dilation is given by
\begin{align*}
\delta_r x  = (r^{v_1}x_1, \cdots, r^{v_n} x_n),
\end{align*}
for $x = (x_1, \cdots, x_n) \in G$ and $r>0$.
The integers $v_1, \cdots, v_n$ are referred to as weights
of the dilations $\{\delta_t\}_{t>0}$, and the positive integer
\begin{align*}
Q:= \sum_{k=1}^\infty k (\dim \mathfrak{g}_k )=\sum_{j=1}^n v_j
\end{align*}
 is called the homogeneous dimension of $G$.

A homogeneous quasi-norm on G is a continuous function $x\rightarrow |x|$ from $G$ to $[0, \infty)$
which vanishes only at $e$ and satisfies that $|x^{-1}| = |x|$ and $|\delta_r x | = r|x|$ for all
$x \in G$ and $r > 0$. An example of homogeneous quasi-norm on $G$ is given by
\begin{align} \label{xkappa}
|x|_\kappa = \left(\sum_{j=1}^n x_j^{2\kappa /v_j} \right)^{1/(2\kappa)},
\end{align}
where $\kappa$ is the smallest common multiple to the weights $v_1, \cdots, v_n$.
Any two homogeneous quasi-norms on $G$ are equivalent (see \cite{FS}).
Henceforth we fix a homogenous quasi-norm $|\cdot|$ on $G$. It satisfies a quasi-triangle inequality: there exists a constant
$\gamma \geq 1$ such that
\begin{align} \label{quas}
 |xy| \leq \gamma (|x| + |y|)
 \end{align}
for all $x,y \in G$.

There is an analogue of polar coordinates on homogeneous groups with the homogeneous dimension $Q$ replacing
the topological dimension $n$, see \cite{FS}:
\begin{align*}
\forall f \in L^1 (G) \quad \int_G f(x)d\mu(x) = \int_0^{\infty} \int_{\mathfrak{S}} f(\delta_r y)r^{Q-1}d\sigma (y)dr,
\end{align*}
where $d\sigma$ is a (unique) positive Borel measure on the unit sphere $\mathfrak{S} := \{x \in G: |x| =1\}$.
This implies that for $0<r < R < \infty$ and $\theta \in \mathbb{R}$,
\begin{align} \label{integ}
\int_{r \leq |x| \leq R} |x|^{\theta -Q}d\mu(x) =
\begin{cases}
C \theta^{-1} (R^\theta - r^\theta)  &\mbox{if } \theta \neq 0, \\
C \log(R /r) &\mbox{if } \theta =0.
\end{cases}
\end{align}
Consequently, if $\theta>0$ then $|\cdot|^{\theta -Q}$ is integrable near the group identity $e$, and
if $\theta <0$ then $|\cdot|^{\theta - Q}$ is integrable near $\infty$.

Since $G$ has been identified with $\mathbb{R}^n$ via the map given in \eqref{iu}, functions on $G$
can be viewed as functions on $\mathbb{R}^n$, and vise versa. This leads naturally to the notions of
test function classes $\mathcal{D}(G)$, $\mathcal{S}(G)$ and the distribution spaces $\mathcal{D}'(G)$,
$\mathcal{S}'(G)$. For example, a function $f$ is
said to be in the Schwartz class $\mathcal{S}(G)$ if $f \circ \exp$ is a Schwartz function
on $\mathbb{R}^n$. The coordinate function $G \ni x =(x_1, \cdots, x_n)
\mapsto x_1 \in \mathbb{R}$ is denoted by $x_1$.
 For a multi-index $\alpha =(\alpha_1, \cdots, \alpha_n) \in \mathbb{N}_0^n$, we define $x^{\alpha} = x_1^{\alpha_1}
\cdots x_n^{\alpha_n}$, as a function on $G$. Similarly, we set $X^{\alpha} = X_1^{\alpha_1} \cdots X_n^{\alpha_n}$
in the universal enveloping Lie algebra $\mathfrak{U}(\mathfrak{g})$ of $\mathfrak{g}$.
We shall follow the usual custom of identifying
each vector of $\mathfrak{g}$ with a left-invariant vector field on $G$ and, more generally, we identify
the universal enveloping Lie algebra of $\mathfrak{g}$ with the left-invariant differential
operators. In what follows we keep the same notation for the vectors and the corresponding operators.
By the Poincar\'{e}-Birkhoff-Witt theorem, the $X^\alpha$'s form a basis for the algebra of the left-invariant
differential operators on $G$.

In a canonical way the dilations $\{\delta_r\}_{r>0}$ lead to the notions of homogeneity for functions and operators. For instance
the degree of homogeneity of the function $x^\alpha$ and the differential operator $X^\alpha$ is
\begin{align*}
[\alpha] : = \sum_{j =1}^n v_j \alpha_j.
\end{align*}

A function $P: G \rightarrow \mathbb{C}$ is called a polynomial, if it is of the form
\begin{align*}
P(x) =\sum_{\alpha \in \mathbb{N}_0^n} c_\alpha x^\alpha
\end{align*}
where all but finitely many of the complex coefficients $c_\alpha$ vanish. The homogeneous degree
of the polynomial $P$ is defined as $\max \{[\alpha]: c_\alpha \neq 0 \}$. For $M \in \mathbb{N}_0$, we set
\begin{align*}
\mathcal{P}_M := \{\mbox{all polynomials on }G \mbox{ with homogeneous degree } \leq M\}.\hat{}
\end{align*}

We denote by $\widetilde{X}_1, \cdots, \widetilde{X}_n$ the corresponding basis for right-invariant vector fields, that is,
\begin{align*}
\widetilde{X}_j f (x) = \frac{d}{dt} f \big(\exp(tX_j) x\big)\big|_{t=0}, \quad j =1, \cdots, n.
\end{align*}
Also, for $\alpha \in \mathbb{N}_0^n$, we set $\widetilde{X}^\alpha = \widetilde{X}_1^{\alpha_1} \cdots \widetilde{X}_n^{\alpha_n}$.

If $f$ and $g$ are measurable functions on $G$, then their convulution is defined by
\begin{align*}
f \ast g (x) = \int_G f(y)g(y^{-1}x)d\mu(y) = \int_G f(xy^{-1})g(y)d\mu(y),
\end{align*}
provided that the integrals converge. For any multi-index $\alpha \in \mathbb{N}_0^n$ and
sufficiently good functions $f$ and $g$, we have  (see \cite[Chapter 1]{FS})
\begin{align} \label{decon}
X^{\alpha} (f \ast g) = f \ast (X^\alpha g), \quad \widetilde{X}^\alpha (f \ast g) = (\widetilde{X}^{\alpha} f) \ast g,
\quad (X^\alpha f) \ast g = f \ast (\widetilde{X}^\alpha g).
\end{align}

\subsection{Fourier analysis on graded Lie groups} The general theory of representation
of Lie groups may be found in \cite{Dix}. Here we also refer to
\cite{FR} for a description which is more adapted to our particular context.

A representation $\pi$ of a Lie group $G$ on a Hilbert space $\mathcal{H}_{\pi} \neq \{0\}$
is a homomorphism from $G$ into the group of bounded linear operators on $\mathcal{H}_{\pi}$ with bounded inverse.
More precisely,

\begin{itemize}
\item for every $x \in G$, the linear mapping $\pi(x) : \mathcal{H}_{\pi} \rightarrow \mathcal{H}_{\pi}$ is bounded and has bounded inverse;
\item for every $x, y \in G$, we have $\pi(xy )=\pi(x) \pi(y)$.
\end{itemize}

 A representation $\pi$ of $G$ is called irreducible if it has no closed invariant subspaces.
$\pi$ is called unitary if $\pi(x)$ is unitary for every $x \in G$, and is called strongly continuous
if the mapping $\pi: G \rightarrow \mathscr{L}(\mathcal{H_{\pi}})$ is continuous with respect to the strong operator
topology in $\mathscr{L}(\mathcal{H}_{\pi})$. Two representations $\pi_1$ and $\pi_2$ are said to be equivalent if there exists a
bounded linear mapping $A: \mathcal{H}_{\pi_1} \rightarrow \mathcal{H}_{\pi_2}$ between their representation spaces with
a bounded inverse such that the relation
$A \pi_1 (x) = \pi_2 (x)A$ holds for all $x \in G$. In this case we write
$\pi_1 \sim \pi_2$, and denote their equivalence class by $[\pi_1] = [\pi_2]$.
The set of all equivalence classes of strongly continuous irreducible unitary
representations of $G$ is called the unitary dual of $G$ and is denoted
by $\widehat{G}$. In what follows, we will identify one representation $\pi$ with
its equivalent class $[\pi]$.

For a unitary representation of $G$, the corresponding infinitesimal representation which acts on
the universal enveloping algebra $\mathfrak{U}(\mathfrak{g})$ of the Lie algebra $\mathfrak{g}$ is still
denoted by $\pi$. This is characterized by its action on $\mathfrak{g}$:
\begin{align*}
\pi (X) = \partial_{t =0} \pi (e^{tX}), \quad X \in \mathfrak{g}.
\end{align*}
The infinitesimal action acts on the space $\mathcal{H}_{\pi}^{\infty}$ of smooth vectors, that is, the space of vectors
$v \in \mathcal{H}_{\pi}$ such that the function $G \ni x \mapsto \pi(x) v \in \mathcal{H}_{\pi}$ is of class $C^{\infty}$.

The Fourier coefficients or group Fourier transform of a function $f \in L^1 (G)$ at $\pi \in \widehat{G}$ is defined by
\begin{align*}
\mathcal{F}_G f (\pi) \equiv \widehat{f}(\pi) \equiv \pi (f) := \int_G f(x) \pi(x)^\ast d\mu(x).
\end{align*}
It is readily seen that
\begin{align*}
\|\widehat{f}(\pi)\|_{\mathscr{L}(\mathcal{H}_\pi)} \leq \|f\|_{L^1 (G)}.
\end{align*}
For $f, g \in L^1 (G)$, we also have
\begin{align*}
\widehat{f \ast g} (\pi) = \widehat{g}(\pi) \widehat{f}(\pi).
\end{align*}

There exists a unique positive Borel measure $\widehat{\mu}$ on $\widehat{G}$, called the Plancherel measure,
such that for any continuous function $f$ on $G$ with compact support, one has
\begin{align*}
\int_G |f(x)|^2 d\mu(x) = \int_{\widehat{G}} \|\mathcal{F}_G f (\pi)\|_{HS(\mathcal{H}_{\pi})}^2 d\widehat{\mu}(\pi),
\end{align*}
where $\|\cdot\|_{HS(\mathcal{H}_{\pi})}$ denotes the Hilbert-Schmidt norm on the space $HS(\mathcal{H}_\pi) \sim \mathcal{H}_{\pi}
\otimes \mathcal{H}_{\pi}^\ast$  of Hilbert-Schmidt operators on the Hilbert space $\mathcal{H}_{\pi}$.
Since $L^1(G) \cap L^2(G)$ is dense in $L^2(G)$, the Fourier transform $\mathcal{F}_G$ extends
to a unitary operator from $L^2(G)$ onto $L^2 (\widehat{G})$.

By the general theory on locally compact unimodular groups of type I (see e.g. \cite{Dix}), if $T$ is an $L^2$-bounded
operator on $G$ which commutes with left-translations, then  there exists a field of bounded operators $\widehat{T}(\pi)$ such that
for all $f \in L^2 (G)$,
\begin{align*}
\mathcal{F}_G (Tf)(\pi) = \widehat{T} (\pi) \widehat{f}(\pi) \quad \mbox{a.e. } \pi \in \widehat{G}.
\end{align*}
Moreover, we have $$\|T\|_{\mathscr{L}(L^2(G))} = \sup_{\pi \in \widehat{G}}
\|\widehat{T}(\pi)\|_{\mathscr{L}(\mathcal{H}_\pi)},$$
where the supremum here is understood as the essential supremum with respect to the Plancherel
measure $\mu$. Conversely, given any $\sigma =\{\sigma (\pi), \pi \in \widehat{G}\} \in L^\infty (\widehat{G})$,
there is a corresponding operator $T_\sigma$ given by
\begin{align*}
\mathcal{F}_G (T_\sigma f) (\pi) = \sigma(\pi) \widehat{f}(\pi), \quad f \in L^2 (G).
\end{align*}
By the Plancherel theorem, $T_\sigma$ is bounded on $L^2(G)$ with $\|T_\sigma\|_{\mathscr{L}(L^2(G))} =\|\sigma\|_{L^\infty (\widehat{G})}$.

If $\pi$ is a unitary irreducible representation of $G$ and $r >0$, we define $r \cdot \pi$
to be the unitary irreducible representation such that
\begin{align*}
r \cdot \pi (x) = \pi (\delta_r x), \quad x \in G.
\end{align*}

\subsection{Rockland operators}
Let $G$ be a graded Lie group. A left-invariant differential operator $\mathcal{R}$
on $G$ is called a Rockland operator if it is homogeneous of positive degree and
for each unitary irreducible non-trivial representation $\pi$ of $G$,
the operator $\pi(\mathcal{R})$ is injective on $\mathcal{H}_{\pi}^{\infty}$.
Rockland operators may be defined on any homogeneous group,
however it turns out that the existence of a Rockland operator on a homogeneous group
implies that (the Lie algebra of) the group admits a gradation.
This is the reason why we and the authors in \cite{FR1} consider the setting of graded
Lie groups. On any graded Lie group $G$, the operator
\begin{align*}
\sum_{j =1}^n (-1)^{\frac{\nu_0}{v_j}} c_j X_j^{2 \frac{\nu_0}{v_j}}
\end{align*}
with $c_j >0$ is a Rockland operator of homogeneous degree 2$\nu_0$ if $\nu_0$
is any common multiple of $v_1, \cdots, v_n$.

We will mainly consider positive Rockland operators.
A Rockland operator $\mathcal{R}$ is said to be positive, if
\begin{align*}
\int_G \mathcal{R}f (x)\overline{f(x)}d\mu(x) \geq 0
\end{align*}
for all $f \in \mathcal{S}(G)$. If a Rockland operator $\mathcal{R}$ is positive
then $\mathcal{R}$ and $\pi (\mathcal{R})$ admit self-adjoint extensions on
$L^2 (G)$ and $\mathcal{H}_{\pi}$, respectively. We use the same notation for
their self-adjoint extensions. By the spectral theory, we have
\begin{align*}
\mathcal{R} = \int_0^{\infty} \lambda dE_{\mathcal{R}}(\lambda)\quad \mbox{and}
\quad \pi(\mathcal{R}) = \int_0^{\infty} \lambda dE_{\pi(\mathcal{R})}(\lambda),
\end{align*}
where $E_{\mathcal{R}}(\lambda)$ (resp. $E_{\pi(\mathcal{R})} (\lambda)$) is the
resolution of the identity associated to $\mathcal{R}$ (resp. $\pi(\mathcal{R})$).

For any bounded Borel function $\varphi$ on $[0,\infty)$, the operator
\begin{equation*}
\varphi(\mathcal{R}) = \int_{0}^{\infty} \varphi(\lambda)dE_{\mathcal{R}}(\lambda)
\end{equation*}
is bounded on $L^{2}(G)$, and commutes with left translations. Thus, by the Schwartz
kernel theorem, there exists a distribution $K_{\varphi(\mathcal{R})} \in \mathcal{S}'(G)$ such that
\begin{equation*}
\varphi(\mathcal{R})f =f \ast K_{\varphi(\mathcal{R})}, \quad \forall f \in \mathcal{S}(G).
\end{equation*}
Note that the point $\lambda =0$ may be neglected in the spectral resolution, since the projection measure of $\{0\}$ is zero
(see \cite{HJL} or \cite[Remark 4.2.8]{FR}). Consequently we should regard $\varphi$ as a function on  $(0, \infty)$ rather than on $[0,\infty)$.

We now recall Hulanicki's theorem, which will play an important role in the proof of our main result.

\begin{theorem}[Hulanicki \cite{Hulanicki}]
Let $G$ be a graded Lie groups and let $\mathcal{R}$ be a positive Rockland
operator on $G$.
For any $M_1 \in \mathbb{N}$ and $M_2 \geq 0$, there exist $C =C(M_1,M_2) >0$ and $k = k(M_1, M_2), k'=k'(M_1,M_2) \in \mathbb{N}_0$
such that, for any $\varphi \in C^k (0, \infty)$, the convolution kernel $K_{\varphi(\mathcal{R})}$ of $\varphi(\mathcal{R})$ satisfies
\begin{align*}
\sum_{[\alpha] \leq M_1} \int_G |X^\alpha K_{\varphi(\mathcal{R})} (x)|(1+|x|_\kappa)^{M_2}d\mu(x) \leq C \sup_{\substack{\lambda\in (0,\infty) \\  \ell \in \{0,1,\cdots, k\} \\
\ell' \in \{0,1,\cdots, k'\}}}
 (1+\lambda)^{\ell'}\left|\frac{d^\ell}{d\lambda^\ell}\varphi(\lambda) \right|.
\end{align*}

The same result with the right-invariant vector fields $\widetilde{X}_j$'s instead of the
left-invariant vector fields $X_j$'s also holds.
\end{theorem}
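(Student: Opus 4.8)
The plan is to reduce the functional calculus of a general multiplier $\varphi$ to that of the heat semigroup $\{e^{-t\mathcal{R}}\}_{t>0}$. Write $p_t:=K_{e^{-t\mathcal{R}}}$ for the heat kernel. The essential input, which I would quote from the hypoellipticity theory of positive Rockland operators rather than reprove, is that $p_1\in\mathcal{S}(G)$, together with the homogeneity relation
\begin{align*}
p_t(x)=t^{-Q/\nu}\,p_1\big(\delta_{t^{-1/\nu}}x\big),\qquad t>0,
\end{align*}
which reflects that $\mathcal{R}$ is homogeneous of degree $\nu$. Since $X^\alpha$ is homogeneous of degree $[\alpha]$, differentiating and rescaling shows that each weighted seminorm $\sum_{[\alpha]\le M_1}\int_G|X^\alpha p_t(x)|(1+|x|_\kappa)^{M_2}\,d\mu(x)$ is an explicit power of $t$ times a constant depending only on $M_1$, $M_2$ and finitely many Schwartz seminorms of $p_1$. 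The same homogeneity yields the scaling identity $K_{\phi(s^\nu\mathcal{R})}(x)=s^{-Q}K_{\phi(\mathcal{R})}(\delta_{1/s}x)$, which is the device that transports estimates between frequency scales.

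I would then split $\varphi$ according to frequency. Fix $\chi\in C_c^\infty(0,\infty)$ supported in $\{1/2\le\lambda\le2\}$ with $\sum_{j\in\mathbb{Z}}\chi(2^{-j}\lambda)=1$ on $(0,\infty)$, and write $\varphi=\varphi^{\mathrm{low}}+\varphi^{\mathrm{high}}$, where $\varphi^{\mathrm{low}}:=\sum_{j\le0}\chi(2^{-j}\cdot)\,\varphi$ is supported in $\{\lambda\le2\}$. This low part falls under a base case whose hard core is again the heat kernel: writing $\varphi^{\mathrm{low}}(\lambda)=g(\lambda)e^{-\lambda}$ with $g:=\varphi^{\mathrm{low}}e^{\lambda}$ supported in $\{\lambda\le2\}$ gives $K_{\varphi^{\mathrm{low}}(\mathcal{R})}=K_{g(\mathcal{R})}\ast p_1$, and the Schwartz character of $K_{g(\mathcal{R})}$ is obtained from the heat kernel through the subordination formula $(1+\mathcal{R})^{-M}=\Gamma(M)^{-1}\int_0^\infty t^{M-1}e^{-t}e^{-t\mathcal{R}}\,dt$, whose kernel inherits good decay from $p_t$, together with the $L^2$-boundedness of $(1+\mathcal{R})^{M}g(\mathcal{R})$ furnished by Plancherel. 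Convolving with $p_1\in\mathcal{S}(G)$ then bounds the desired seminorm of $K_{\varphi^{\mathrm{low}}(\mathcal{R})}$ by finitely many derivatives of $\varphi^{\mathrm{low}}$, hence by the right-hand side $\mathcal{N}$ of the asserted inequality.

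For the high part I would exploit scaling dyadically. Setting $\varphi_j:=\chi(2^{-j}\cdot)\varphi=\psi_j(2^{-j}\cdot)$ with $\psi_j(s):=\chi(s)\varphi(2^js)$ supported in $s\sim1$, the scaling identity gives $K_{\varphi_j(\mathcal{R})}(x)=2^{jQ/\nu}K_{\psi_j(\mathcal{R})}(\delta_{2^{j/\nu}}x)$. A change of variables together with the $[\alpha]$-homogeneity of $X^\alpha$ yields, for $j\ge0$,
\begin{align*}
\sum_{[\alpha]\le M_1}\int_G|X^\alpha K_{\varphi_j(\mathcal{R})}(x)|(1+|x|_\kappa)^{M_2}\,d\mu(x)\lesssim 2^{jM_1/\nu}\,\|\psi_j\|_{C^k},
\end{align*}
where the base-case estimate at unit scale has been applied with a suitable $k=k(M_1,M_2)$. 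Since $\lambda=2^js\sim2^j$ on the support of $\psi_j$, the chain rule and the bound $|\varphi^{(\ell)}(\lambda)|\le\mathcal{N}(1+\lambda)^{-k'}$ give $\|\psi_j\|_{C^k}\lesssim 2^{j(k-k')}\mathcal{N}$, so that $\sum_{j\ge0}2^{jM_1/\nu}2^{j(k-k')}\mathcal{N}$ converges once $k'$ is chosen large enough relative to $k$, $M_1$ and $\nu$; this fixes $k'=k'(M_1,M_2)$.

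The main obstacle is twofold. The first and deepest is the base-case input — that smooth, boundedly supported functions of $\mathcal{R}$, and in particular the heat kernel, have Schwartz convolution kernels with quantitatively controlled seminorms; this rests on the Rockland/hypoellipticity theory and I would cite it. The second, more bookkeeping-heavy point is the passage from the finite spectral data $\mathcal{N}$ to the weighted $L^1$ kernel seminorm: one must isolate the low frequencies, where $\varphi$ need not decay, as a single boundedly supported block, decompose only the high frequencies, where the weight $(1+\lambda)^{\ell'}$ forces decay, and check that the geometric series converges — which is precisely what determines the admissible orders $k$ and $k'$ in terms of $M_1$, $M_2$ and $Q$. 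Finally, the statement for the right-invariant fields $\widetilde X_j$ follows by symmetry: applying the left-invariant case to the reflection $f(x)\mapsto f(x^{-1})$ interchanges left- and right-invariant derivatives and preserves the weighted $L^1$ seminorms up to the equivalence of homogeneous quasi-norms, so no separate argument is needed.
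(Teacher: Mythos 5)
First, a point of reference: the paper does not prove this statement at all --- it is quoted from Hulanicki's paper \cite{Hulanicki} and used as a black box (only Corollary \ref{corohul} and the estimate \eqref{444444} are extracted from it). So there is no internal proof to compare against; what follows assesses your sketch on its own terms.

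Your global architecture --- dyadic frequency decomposition, the scaling identity $K_{\phi(s^{\nu}\mathcal{R})}(x)=s^{-Q}K_{\phi(\mathcal{R})}(\delta_{1/s}x)$, and summation of the resulting geometric series to fix $k$ and $k'$ --- is sound, and it correctly reduces the theorem to a single unit-scale estimate for multipliers supported in a fixed annulus. The gap is in that unit-scale estimate, and it is not bookkeeping: it is where the entire difficulty of Hulanicki's theorem lives. You assert that $K_{g(\mathcal{R})}=K_{(1+\mathcal{R})^{M}g(\mathcal{R})}\ast B_{M}$ is Schwartz because the Bessel kernel $B_{M}$ inherits rapid decay from the heat kernel and $(1+\mathcal{R})^{M}g(\mathcal{R})$ is $L^{2}$-bounded by Plancherel. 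But Plancherel only places $K_{(1+\mathcal{R})^{M}g(\mathcal{R})}$ in \emph{unweighted} $L^{2}(G)$, and the convolution of an arbitrary $L^{2}$ function with a rapidly decaying one is smooth with all derivatives bounded, yet need not decay at infinity at all. Hence the quantities the theorem actually controls, namely $\int_{G}|X^{\alpha}K_{g(\mathcal{R})}(x)|(1+|x|_{\kappa})^{M_{2}}\,d\mu(x)$ --- where the weight is the whole point --- are not reached by your argument; and since the same base case is invoked for every rescaled block $\psi_{j}$, the gap propagates through the high-frequency sum as well. To close it one must manufacture spatial decay of the kernel from spectral data, for instance by estimating the weighted $L^{2}$ norms of $x^{\beta}X^{\alpha}K_{\phi(\mathcal{R})}$, which via the Plancherel theorem amount to bounds on the difference operators $\Delta^{\beta}$ applied to $\phi(\pi(\mathcal{R}))$ (the route of \cite{FR} and of \eqref{444444} in this paper), and then passing to weighted $L^{1}$ by the Cauchy--Schwarz inequality against $(1+|x|_{\kappa})^{-Q-1}$; alternatively one may run Hulanicki's original commutator and Banach-algebra iteration with the heat semigroup. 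Neither mechanism, nor a substitute for it, appears in your sketch. The reduction of the right-invariant statement to the left-invariant one via the reflection $x\mapsto x^{-1}$ is fine.
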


\begin{corollary} \label{corohul}
Let $\mathcal{R}$ be a positive Rockland operator on a stratified Lie group $G$.
If $\varphi$ is a function on $(0, \infty)$ such that $\varphi = \widetilde{\varphi}|_{(0,\infty)}$ for some $\widetilde{\varphi}
\in \mathcal{S}(\mathbb{R})$, then $K_{\varphi(\mathcal{R})} \in \mathcal{S}(G)$.
\end{corollary}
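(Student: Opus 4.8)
The plan is to verify that $K := K_{\varphi(\mathcal{R})}$ satisfies all the Schwartz seminorm bounds on $G$. Recall (Folland--Stein \cite{FS}) that the Schwartz class admits the equivalent description
\[
\mathcal{S}(G) = \Big\{ f \in C^\infty(G) : \sup_{x \in G} (1+|x|_\kappa)^N |X^\alpha f(x)| < \infty \ \text{for all } N \in \mathbb{N}_0, \ \alpha \in \mathbb{N}_0^n \Big\},
\]
where the $X^\alpha$ are the left-invariant derivatives. Thus it suffices to show that each of these sup-seminorms of $K$ is finite. First I would record that $\varphi = \widetilde{\varphi}|_{(0,\infty)}$ is smooth on $(0,\infty)$ and that, because $\widetilde{\varphi} \in \mathcal{S}(\mathbb{R})$, every derivative $\frac{d^\ell}{d\lambda^\ell}\varphi$ decays faster than any polynomial; hence for every $k, k' \in \mathbb{N}_0$ the right-hand side of Hulanicki's theorem, namely $\sup (1+\lambda)^{\ell'}\big|\frac{d^\ell}{d\lambda^\ell}\varphi(\lambda)\big|$, is finite.

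Consequently, applying Hulanicki's theorem for each pair $(M_1, M_2)$ gives the weighted $L^1$ bounds
\[
\sum_{[\alpha] \le M_1} \int_G |X^\alpha K(x)|\, (1+|x|_\kappa)^{M_2}\, d\mu(x) < \infty \qquad \text{for all } M_1 \in \mathbb{N}, \ M_2 \ge 0.
\]
These control all left-invariant derivatives of $K$ in $L^1$ against arbitrary polynomial weights, but the Schwartz seminorms are pointwise, so the remaining task is to upgrade $L^1$-control to $L^\infty$-control.

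To do this I would fix $N$ and $\alpha$, set $u(x) = (1+|x|_\kappa)^N X^\alpha K(x)$, and apply the Euclidean Sobolev embedding $W^{n,1}(\mathbb{R}^n) \hookrightarrow L^\infty(\mathbb{R}^n)$ through the identification $G \cong \mathbb{R}^n$ of \eqref{iu}, giving $\|u\|_{L^\infty} \lesssim \sum_{|\gamma| \le n} \|\partial_x^\gamma u\|_{L^1}$. Each $\partial_x^\gamma u$ expands, via the Leibniz rule, into a sum of terms $\big(\partial_x^{\gamma'} (1+|x|_\kappa)^N\big)\big(\partial_x^{\gamma''} X^\alpha K\big)$; the first factor is $O\big((1+|x|_\kappa)^{N}\big)$, while the Euclidean derivative $\partial_x^{\gamma''}$ of $X^\alpha K$ can be rewritten, using the fact that on a nilpotent group each $\partial_{x_j}$ is a combination of the $X_k$ with polynomial coefficients (see \cite{FS}), as a finite sum $\sum_\beta Q_\beta(x) X^\beta K(x)$ with polynomials $Q_\beta$ and $[\beta]$ bounded in terms of $|\gamma''|$ and $[\alpha]$. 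Collecting the polynomial factors into a single weight $(1+|x|_\kappa)^{M_2}$ and choosing $M_1$ large enough, one bounds $\|u\|_{L^\infty}$ by a finite linear combination of the weighted $L^1$ quantities above, which are finite. This yields $\sup_x (1+|x|_\kappa)^N |X^\alpha K(x)| < \infty$ for every $N$ and $\alpha$, hence $K \in \mathcal{S}(G)$.

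The main obstacle is precisely this last Sobolev upgrade together with the bookkeeping that converts Euclidean derivatives and polynomial weights into the left-invariant derivatives and homogeneous weights appearing in Hulanicki's estimate; once the correct equivalent description of $\mathcal{S}(G)$ is in hand, everything else follows directly from the rapid decay of $\widetilde{\varphi}$ and its derivatives.
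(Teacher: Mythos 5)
Your argument is correct and is exactly the standard justification that the paper leaves implicit (the corollary is stated without proof): Hulanicki's theorem gives finiteness of all weighted $L^1$ seminorms of the left-invariant derivatives of $K_{\varphi(\mathcal{R})}$, and these are equivalent to the sup-seminorms defining $\mathcal{S}(G)$ via the Sobolev-embedding and derivative-conversion bookkeeping you describe (cf.\ the equivalent seminorm families in \cite{FS}). The only cosmetic point is that near the origin one should use the polynomial weight $(1+|x|_\kappa^{2\kappa})^{N}$ rather than $(1+|x|_\kappa)^{N}$ when differentiating the weight, since $|x|_\kappa$ need not be smooth at $e$; the two weights are comparable, so nothing changes.
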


\subsection{Difference operators}
The Mihlin condition \eqref{mih} is formulated in terms of the derivatives with respect to the Fourier variable $\xi$.
However, for a field $\sigma =\{\sigma (\pi), \pi \in \widehat{G}\}$ of operators, there is no direct way
to define an analogue of derivatives with respect to the Fourier variable $\pi$. To generalize the symbolic conditions to
the setting of graded Lie groups, Fischer and Ruzhansky \cite{FR} introduced the so-called difference operators,
whose definition we now recall.

For $a, b \in \mathbb{R}$, we denote by $\mathscr{L}_L (L_a^2(G), L_b^2(G))$ the subspace of all $T \in \mathscr{L}(L^2_a(G), L^2_b(G))$
which are left-invariant. Here $L^2_a(G)$ is the Bessel potential space (fractional Sobolev space) defined in \cite{FR2}.
Define
\begin{align*}
\mathcal{K}_{a,b} (G):= \Big\{K \in \mathcal{S}'(G): \   &\mbox{the operator } \mathcal{S}(G) \ni
f \mapsto f \ast K  \mbox{ extends to} \\
&\mbox{a bounded operator from }
 L_a^2(G) \mbox{ to } L_b^2(G)) \Big\}
\end{align*}
and define $L_{a,b}^{\infty}(\widehat{G})$
to be the space of all fields $\sigma = \{\sigma(\pi), \pi \in \widehat{G}\}$ such that
\begin{align*}
\|\sigma\|_{L^{\infty}_{a,b}(\widehat{G})} := \sup_{\pi \in \widehat{G}} \|\pi(I +\mathcal{R})^{\frac{b}{\nu}}
\sigma(\pi)\pi(I +\mathcal{R})^{-\frac{a}{\nu}} \|_{\mathscr{L}(\mathcal{H}_{\pi})} <\infty.
\end{align*}
From \cite[Proposition 5.1.24]{FR} we see that, if $\sigma \in L^{\infty}_{a,b}(\widehat{G})$
then the Fourier multiplier operator $T_{\sigma}$ corresponding to $\sigma$ belongs to $\mathscr{L}_L (L_a^2 (G), L_b^2(G))$ with
\begin{align*}
 \|T_{\sigma}\|_{\mathscr{L}(L_a^2 (G), L_b^2 (G))} = \|\sigma\|_{L^{\infty}_{a,b}(\widehat{G})}.
\end{align*}
Conversely, if $T \in \mathscr{L}_L (L_a^2(G), L_b^2(G))$, then there exists a unique $\sigma \in L^{\infty}_{a,b}(\widehat{G})$
such that
\begin{align*}
\mathcal{F}_G (T f)(\pi) = \sigma(\pi) \widehat{f}(\pi), \quad f \in L^2(G).
\end{align*}
In this case, denoting by $K \in \mathcal{K}_{a,b}(G)$ the convolution kernel of $T$, we define
\begin{align*}
\mathcal{F}_G K = \sigma \quad \mbox{and} \quad \mathcal{F}_G^{-1}\sigma = K.
\end{align*}
This extends the definition of Fourier transform to the space
$\mathcal{K}_{a,b} (G)$. See \cite[Definition 5.1.25]{FR}.

 For $\alpha \in \mathbb{N}_0^n$ and $\sigma  =\{\sigma (\pi), \pi \in \widehat{G}\}\in L^{\infty}_{a,b}(\widehat{G})$,
 the difference operator $\Delta^\alpha$ acting on $\sigma$ is defined according to the formula (see
 \cite[Definition 5.2.1]{FR})
\begin{align*}
\Delta^\alpha \sigma (\pi) = \mathcal{F}_G (q_\alpha \mathcal{F}_G^{-1}\sigma)(\pi) \quad \mbox{for a.e. } \pi \in \widehat{G},
 \end{align*}
 where $q_\alpha(x) =x^\alpha$.
Analogously to the derivatives in the Euclidean setting,
the operator $\Delta^{\alpha}$ satisfies the Lebnitz rule \cite[Section 5.2.2]{FR}:
\begin{align} \label{leibniz}
\Delta^{\alpha} (\sigma \tau) =\sum_{\alpha_1 +\alpha_2 =\alpha} C_{\alpha_1, \alpha_2} \Delta^{\alpha_1} (\sigma)
\Delta^{\alpha_2} (\tau), \quad \sigma, \tau \in L^{\infty}_{a,b}(\widehat{G}).
\end{align}

\section{Hardy spaces on graded Lie groups} \label{hardyspaces}

A comprehensive theory of Hardy spaces on general homogeneous groups was built
by Folland and Stein \cite{FS}. Since all graded Lie groups are homogeneous, the theory
in \cite{FS} applies to our setting.

In what follows, $G$ is always a graded Lie group with homogeneous dimension $Q$.

\subsection{Definition of Hardy spaces on $G$} We first introduce some maximal functions.
Given a function $\Phi \in \mathcal{S}(G)$, we defined the nontangential maximal function
$M_\Phi f$ and the radial maximal function $M_\Phi^0 f$ of $f \in \mathcal{S}'(G)$ by
\begin{align*}
M_\Phi f (x) : =  \sup_{|x^{-1}y| <t} |f \ast \Phi_t (y)|
\quad \mbox{and} \quad
M_\Phi^0 f (x): = \sup_{t >0 } |f \ast \Phi_t (x)|,
\end{align*}
respectively,
where $\Phi_t (x):= t^{-Q} \Phi(\delta_{t^{-1}}x)$. We then define the grand maximal function $M_{(N)}f$ for each $N\in\mathbb{N}$ by
\begin{align*}
M_{(N)} f(x) := \sup_{\Phi \in \mathcal{S}(G), \|\Phi\|_{(N)} \leq 1} M_\Phi f(x),
\end{align*}
where
\begin{align*}
\|\Phi\|_{(N)}: = \sup_{|\alpha| \leq N, x \in G} (1 + |x|_\kappa)^{(N+1)(Q+1)} |\widetilde{X}^\alpha \Phi (x)|.
\end{align*}
Moreover, given $N \in \mathbb{N}$, we define the grand maximal function $M_{(N)}f$ of $f \in \mathcal{S}'(G)$ by

\begin{definition}
For $0<p<\infty$, the Hardy space $H^p(G)$ is defined as
\begin{align*}
H^p (G): = \big\{f \in \mathcal{S}'(G) : M_{(N_p)} f <\infty\big\},
\end{align*}
where
$$N_p := \min\{[\alpha]: \alpha \in \mathbb{N}_0^n \mbox{ with } [\alpha] > Q (1/p-1)\}.$$
The quasi-norm on $H^p(G)$ is defined by
\begin{align*}
\|f\|_{H^p (G)}: = \|M_{(N_p)} f\|_{L^p (G)}.
\end{align*}
\end{definition}

\medskip
The Hardy spaces $H^p (G)$, initially defined via grand maximal function, can be
characterized by radial maximal function and nontangential maximal function equivalently. To recall these maximal characterizations,
we need the notion of commutative approximate identities  introduced in \cite{FS}. A commutative approximate identity
on $G$ is a function $\Phi \in \mathcal{S}(G)$ such that
$\int_{G} \Phi (x)dx =1$ and $\Phi_s  \ast \Phi_t =\Phi_t  \ast \Phi_s$ for all $s, t >0$.
On a graded Lie  group it is easy to construct a commutative approximate identity. Indeed,
if $\mathcal{R}$ is a positive Rockland operator on $G$, and $\varphi \in \mathcal{S}(\mathbb{R})$
such that $\varphi(0) =1$, then the convolution kernel of the operator $\varphi(\mathcal{R})$
is a commutative approximate identity.

\begin{proposition} {\rm (\cite[Corollary 4.17]{FS})}
Suppose $0< p <\infty$ and $\Phi$ is a commutative approximate identity. Then for $f \in \mathcal{S}'(G)$, the following are equivalent:
\begin{itemize}
\item[\rm (i)] $M_{\Phi}^0 f \in L^p(G)$;
\item[\rm (ii)] $M_{\Phi} f \in L^p(G)$;
\item[\rm (iii)] $M_{(N_p)} f \in L^p(G)$.
\end{itemize}
Moreover, we have
$$\|M_{\Phi}^0 f\|_{L^p(G)} \sim  \|M_{\Phi}f\|_{L^p(G)} \sim \|M_{(N_p)}f\|_{L^p (G)}$$
with the implicit constants depending only on $\Phi$ and $p$.
\end{proposition}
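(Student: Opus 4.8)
The plan is to close the cycle of implications (iii) $\Rightarrow$ (ii) $\Rightarrow$ (i) $\Rightarrow$ (iii), noting that the first two arrows are elementary and that all of the difficulty lies in the last. For (ii) $\Rightarrow$ (i), restrict the nontangential supremum defining $M_\Phi f$ to the admissible point $y=x$ (legitimate since $|x^{-1}x| = 0 < t$ for every $t>0$) to obtain the pointwise bound $M_\Phi^0 f(x) \leq M_\Phi f(x)$. For (iii) $\Rightarrow$ (ii), set $c_\Phi := \|\Phi\|_{(N_p)} < \infty$; then $\Phi/c_\Phi$ lies in the unit ball defining $M_{(N_p)}$, so $M_\Phi f(x) \leq c_\Phi M_{(N_p)} f(x)$. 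Both are pointwise inequalities with constants depending only on $\Phi$ and $p$, so the whole statement reduces to the single $L^p$-estimate $\|M_{(N_p)} f\|_{L^p} \lesssim \|M_\Phi^0 f\|_{L^p}$.

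To handle this I would interpose a tangential maximal function. Fix an exponent $L$ (to be chosen large, depending on $p$ and $Q$) and set
\[
M_\Phi^{**,L} f(x) := \sup_{t > 0,\, y \in G} \frac{|f \ast \Phi_t(y)|}{\big(1 + t^{-1}|y^{-1}x|\big)^{L}}.
\]
Trivially $M_\Phi^0 f \leq M_\Phi^{**,L} f$ and, since $t^{-1}|y^{-1}x| < 1$ inside the cone, also $M_\Phi f \leq 2^{L} M_\Phi^{**,L} f$, so this object dominates both the radial and the nontangential maximal functions. The first substantial step is the reverse $L^p$ bound $\|M_\Phi^{**,L} f\|_{L^p} \lesssim \|M_\Phi^0 f\|_{L^p}$. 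The mechanism is the sub-$L^p$ \emph{power trick}: choose $0 < r < p$ with $Lr > Q$ and establish the pointwise domination
\[
\big(M_\Phi^{**,L} f(x)\big)^r \lesssim \mathcal{M}\big((M_\Phi^0 f)^r\big)(x),
\]
where $\mathcal{M}$ denotes the Hardy--Littlewood maximal operator over quasi-norm balls. Granting this (which needs a standard a priori truncation of $\Phi$ to keep all quantities finite, later removed by a limiting argument), take $L^{p/r}$-norms: since $p/r > 1$, $\mathcal{M}$ is bounded on $L^{p/r}(G)$, and the estimate follows.

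The passage from the tangential maximal function to the grand maximal function is where the commutativity of $\Phi$ becomes decisive, since on a non-abelian $G$ convolution no longer commutes automatically. Because $\Phi_s \ast \Phi_t = \Phi_t \ast \Phi_s$ and $\int_G \Phi\, d\mu = 1$, one can manufacture a Calder\'on-type reproducing identity of the schematic form
\[
g = \int_0^\infty (g \ast \Phi_t) \ast \eta_t \, \frac{dt}{t}
\]
for an auxiliary $\eta \in \mathcal{S}(G)$ built from $\Phi$. Applying it with $g = f \ast \Psi_s$ for an arbitrary test function $\Psi$ with $\|\Psi\|_{(N_p)} \leq 1$ and using associativity of convolution, I would write $f \ast \Psi_s(y)$ as a superposition of the values $f \ast \Phi_t(z)$ against a rapidly decaying kernel, then bound each value by $M_\Phi^{**,L} f(x)\,\big(1 + t^{-1}|z^{-1}x|\big)^{L}$ and integrate the decay factors. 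This yields $M_\Psi f(x) \lesssim M_\Phi^{**,L} f(x)$ uniformly in $\Psi$, hence $M_{(N_p)} f \lesssim M_\Phi^{**,L} f$ pointwise; the critical index $N_p = \min\{[\alpha]: [\alpha] > Q(1/p-1)\}$ is precisely what makes the smoothness budget in the kernel estimate compatible with the choice of a power $r < p$ in the previous step. Combined with the power-trick estimate, this gives $\|M_{(N_p)} f\|_{L^p} \lesssim \|M_\Phi^0 f\|_{L^p}$ and closes the cycle.

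The hard part will be the reverse $L^p$ control of the tangential maximal function by the radial one: proving the pointwise bound by $\mathcal{M}\big((M_\Phi^0 f)^r\big)$ requires a careful use of the smoothness and decay of $\Phi$ together with the correct calibration of $r$ and $L$, and it is exactly this sub-$L^p$ power device that makes the Hardy--Littlewood maximal operator available when $0 < p \leq 1$. The reproducing-formula step, though it is the one genuinely exploiting the \emph{commutativity} hypothesis, is comparatively mechanical once the tangential maximal function is in hand.
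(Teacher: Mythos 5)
The paper offers no proof of this proposition: it is quoted verbatim from Folland--Stein \cite[Corollary~4.17]{FS}, so the only fair comparison is with the classical argument there. Your overall architecture --- the trivial pointwise bounds $M^0_\Phi f\le M_\Phi f\le \|\Phi\|_{(N_p)}M_{(N_p)}f$, the interposition of a tangential maximal function $M^{**,L}_\Phi f$, and the use of commutativity to build a Calder\'on-type reproducing identity that dominates the grand maximal function by the tangential one --- is exactly the Fefferman--Stein/Folland--Stein scheme, and the last step (grand $\lesssim$ tangential via the reproducing formula, with $N_p$ calibrated against $Q(1/p-1)$) is sound as sketched.

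The genuine gap is in the step you yourself flag as the hard one. The pointwise bound
\[
\bigl(M^{**,L}_\Phi f(x)\bigr)^r\ \lesssim\ \mathcal{M}\bigl((M^{0}_\Phi f)^r\bigr)(x)
\]
with the \emph{radial} maximal function on the right is not the estimate that the power trick delivers, and it is not available by averaging alone. What the elementary averaging argument gives (for $Lr\ge Q$) is
\[
\bigl(M^{**,L}_\Phi f(x)\bigr)^r\ \lesssim\ \mathcal{M}\bigl((M_\Phi f)^r\bigr)(x),
\]
with the \emph{nontangential} function on the right: one bounds $|f\ast\Phi_t(y)|$ by $M_\Phi f(z)$ for every $z$ in the ball $B(y,t)$ and then averages over that ball, which is precisely what the radial function does not permit, since it only controls $u(y,t)=f\ast\Phi_t(y)$ on the vertical ray over each point. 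Feeding the nontangential bound back into $L^{p/r}$ only yields $\|M^{**,L}_\Phi f\|_{L^p}\lesssim\|M_\Phi f\|_{L^p}$, which does not close the loop. The missing ingredient is the distribution-function bootstrap: one works with truncated maximal functions (to have finiteness and $L^p$ membership a priori), introduces the set $F=\{x: M^{**,L}_\Phi f(x)\le A\,M_\Phi f(x)\}$, shows via the displayed nontangential estimate that $\int_{F^c}(M_\Phi f)^p\le\tfrac12\int(M_\Phi f)^p$ for $A$ large, and then on $F$ uses a gradient (mean-value) estimate for $u(\cdot,t)$ --- itself controlled by the tangential function, hence by $A\,M_\Phi f(x)$ on $F$ --- to show that a large value of $u(y,t)$ forces $M^0_\Phi f$ to be comparably large on a ball of radius $\delta t$ around $y$, whence $M_\Phi f(x)\lesssim[\mathcal{M}((M^0_\Phi f)^q)(x)]^{1/q}$ for $x\in F$. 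Only after this absorption argument does one obtain $\|M^{**,L}_\Phi f\|_{L^p}\lesssim\|M^0_\Phi f\|_{L^p}$. As written, your proposal replaces this two-stage argument by a single pointwise inequality that you would not be able to prove; the rest of the plan is fine once this step is repaired.
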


\begin{remark}
If $1<p <\infty$, the spaces $H^p (G)$ and $L^p(G)$ coincide with equivalent norms. See \cite[p. 75]{FS}.
\end{remark}

\subsection{Atomic decomposition}
Atomic decomposition is a very useful tool for the study of boundedness of operators on Hardy spaces.
Analogously to the Euclidean case, Hardy spaces on graded Lie groups also admit an atomic decomposition,
which we now recall. See \cite{FS} for more details.

A triplet $(p,q,M)$ is said to be admissible,
if $0< p \leq 1 \leq q \leq \infty$, $p \neq q$ and $M \in \mathbb{N}_0$ with $M \geq \max \big\{[\alpha]: \alpha \in \mathbb{N}_0^n
\mbox{ with } [\alpha] \leq Q(1/p-1)\big\}$.
\begin{definition}
 Given an admissible triplet $(p,q,M)$, we say that a function $a$ on $G$ is a
 $(p,q,M)$-atom, if it is a compactly supported $L^q$ function such that
\begin{itemize}
\item[(i)] there is a ball $B$ such that $\supp a \subset \overline{B}$ and $\|a\|_{L^q} \leq \mu(B)^{1/q - 1/p}$;
\item[(ii)]  for every $P\in \mathcal{P}_M$, $\int_G a(x)P(x)d\mu(x) =0$.
\end{itemize}
\end{definition}
The atomic decomposition of $H^p (G)$ can be stated as follows.
\begin{proposition}[\cite{FS}] \label{atomicdec}
Let $(p,q,M)$ be an admissible triplet. Then there is a constant $c_1>0$ such that for all any $(p,q,M)$-atom $a$, one has
\begin{align*}
 \|a\|_{H^p (G)} \leq c_1.
 \end{align*}
Conversely, given any $f \in H^p (G)$,
there exist a sequence $\{a_j\}_{j=1}^{\infty}$ of $(p,q,M)$-atoms and a sequence $\{\lambda_j\}_{j=1}^{\infty}$ of complex numbers such that
$f = \sum_{j=1}^{\infty}\lambda_j a_j$ with convergence in $\mathcal{S}'(G)$ and
\begin{align*}
\left(\sum_{j=1}^{\infty}|\lambda_j|^p\right)^{1/p} \leq c_2 \|f\|_{H^p (G)},
\end{align*}
where $c_2$ is a constant independent of $f$.
\end{proposition}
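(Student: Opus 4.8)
The plan is to establish the two inclusions separately, following the Calderón--Zygmund scheme adapted to the homogeneous structure of $G$. \emph{Uniform bound for atoms.} Let $a$ be a $(p,q,M)$-atom with $\supp a \subset \overline{B}$, $B = B(x_0,r)$, and $\|a\|_{L^q} \le \mu(B)^{1/q-1/p}$. I would bound $\|a\|_{H^p}^p = \|M_{(N_p)}a\|_{L^p}^p$ by splitting $G$ into the dilated ball $B^\ast := B(x_0, 2\gamma r)$ and its complement. On $B^\ast$ the grand maximal function is dominated pointwise by the Hardy--Littlewood maximal function, hence is bounded on $L^q$ for $q>1$ (and of weak type $(1,1)$ for $q=1$); together with Hölder's inequality, $\mu(B^\ast)\sim\mu(B)$, and the size condition this gives $\int_{B^\ast}(M_{(N_p)}a)^p\,d\mu \lesssim \|a\|_{L^q}^p\,\mu(B^\ast)^{1-p/q}\lesssim 1$. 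On $G\setminus B^\ast$ I would use the moment cancellation: subtracting from the test function its left Taylor polynomial at $x_0$ of homogeneous degree $\le M$ (via Bonfiglioli's Taylor formula with integral remainder, \cite{Bon}) yields the pointwise decay $M_{(N_p)}a(x)\lesssim r^{\,M+1}\,|x_0^{-1}x|^{-Q-(M+1)}\,\|a\|_{L^1}$ for $|x_0^{-1}x|\gtrsim r$, where $M+1$ is to be read as the smallest homogeneous degree exceeding $M$. Since $\|a\|_{L^1}\lesssim \mu(B)^{1-1/p}\sim r^{Q(1-1/p)}$, raising to the power $p$ and integrating in polar coordinates via \eqref{integ} gives a finite integral $\lesssim 1$, the convergence being exactly guaranteed by the admissibility requirement $M\ge\max\{[\alpha]:[\alpha]\le Q(1/p-1)\}$, which forces the effective decay exponent to exceed $Q(1/p-1)$.

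\emph{Atomic decomposition.} By density it suffices to decompose $f$ in a convenient dense subclass of $H^p(G)$ (say $H^p\cap L^2$) and then pass to the limit. For $k\in\mathbb{Z}$ put $\Omega_k := \{x\in G: M_{(N_p)}f(x) > 2^k\}$; these are open by lower semicontinuity of the maximal function, have finite measure, and satisfy $\Omega_{k+1}\subset\Omega_k$. To each $\Omega_k$ I would apply a Whitney decomposition into balls $\{B_{k,j}\}_j$ of bounded overlap, with radii comparable to their distance to $G\setminus\Omega_k$, and a subordinate partition of unity $\{\zeta_{k,j}\}_j$ respecting the dilations. On each $B_{k,j}$ I replace $f\zeta_{k,j}$ by $(f-P_{k,j})\zeta_{k,j}$, where $P_{k,j}\in\mathcal{P}_M$ is chosen so that this bad part $b_{k,j}$ has vanishing moments against all of $\mathcal{P}_M$; setting $g_k := f-\sum_j b_{k,j}$, one shows $g_k\to f$ as $k\to+\infty$ and $g_k\to 0$ as $k\to-\infty$ in $\mathcal{S}'(G)$. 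Telescoping $f=\sum_k(g_{k+1}-g_k)$ and regrouping (using $\Omega_{k+1}\subset\Omega_k$) writes each difference as a sum of functions supported in the $B_{k,j}$ carrying the correct size and cancellation, so that, after normalization by $\mu(B_{k,j})^{1/p-1/q}$, they become $(p,q,M)$-atoms $a_{k,j}$ with coefficients $\lambda_{k,j}\sim 2^k\mu(B_{k,j})^{1/p}$. Finally, the bounded overlap of the Whitney balls and the distributional identity $\sum_k 2^{kp}\mu(\Omega_k)\sim\int_G(M_{(N_p)}f)^p\,d\mu$ give $\sum_{k,j}|\lambda_{k,j}|^p\lesssim\sum_k 2^{kp}\mu(\Omega_k)\sim\|f\|_{H^p}^p$.

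\emph{Main obstacle.} The technical heart is the local polynomial approximation: one must show that on each Whitney ball the error $|f-P_{k,j}|$ is controlled by $2^k$ times the appropriate power of the radius, i.e. pass from the \emph{global} grand maximal bound to a \emph{pointwise} approximation of $f$ by an element of $\mathcal{P}_M$. This rests on a reproducing-formula argument and on careful bookkeeping of the non-commutative structure --- left-invariance of the vector fields, the quasi-triangle inequality \eqref{quas}, and the homogeneity of $\{\delta_r\}$ --- and it is here that the interplay between the maximal-function order $N_p$ and the moment order $M$ in the admissibility condition must be reconciled. The remaining points (the $\mathcal{S}'(G)$-convergence of the telescoping series and the uniform atom bound) follow from the first direction together with the overlap control.
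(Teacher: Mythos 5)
The paper offers no proof of this proposition: it is quoted verbatim from Folland--Stein \cite{FS}, and your sketch is precisely the argument of that source (the uniform atom bound via the dilated-ball splitting and moment cancellation, and the converse via the Calder\'on--Zygmund/Whitney decomposition at heights $2^k$ with polynomial projections and telescoping). So your proposal is correct and takes essentially the same route as the cited proof; the only point worth flagging is that \cite{FS} runs the decomposition directly on distributions rather than via a dense subclass, which avoids having to justify stability of the atomic decomposition under limits.
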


\section{Proof of main result} \label{proofoftheorem}

We need the following Taylor's formula with integral remainder on homogeneous groups,
due to  Bonfiglioli (see \cite[Theorem 2]{Bon}). Note that
in \cite{Bon} it is assumed that $v_1 =1$, in which case one has $\lceil M \rfloor = M$.

\begin{lemma}\label{TI}
Suppose $f \in C^{M+1}(G)$ for some $M \in \mathbb{N}_0$.
Let $y \mapsto P_{x,M}^f(y)$ denote the right Taylor polynomial of $f$ at $x$ of homogeneous degree $M$.
Then there exists a positive group constant $C_M$ such that
\begin{align*}
f(yx) -P^f_{x,M} (y)& = \sum_{\substack{|\alpha| \leq \lceil M \rfloor \\ [\alpha] \geq M +1}}  \widetilde{X}^\alpha
f (x) \left(\sum_{\beta: [\beta] = [\alpha]} C_{\alpha,\beta} y^\beta \right)\\
& \quad + \sum_{\substack{|\alpha| \leq \lceil M\rfloor +1 \\ [\alpha] \geq M+1}}
\left(\sum_{\beta: [\beta] = [\alpha]} C_{\alpha,\beta}' y^\beta \right)\int_0^1 (\widetilde{X}^\alpha f)(y_{(t)} x) \frac{(1-t)^M}{M!}dt,
\end{align*}
where
\begin{align*}
y_{(t)}:= \exp\left(\sum_{j=1}^n ty_j X_j\right) \equiv (ty_1, \cdots, ty_n),
\end{align*}
$\lceil M\rfloor:= \max\{|\alpha|: \alpha \in \mathbb{N}_0^n \mbox{ with } [\alpha] \leq M\}$,
and $C_{\alpha,\beta}$, $C_{\alpha,\beta}'$ are constants.
\end{lemma}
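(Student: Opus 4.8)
The plan is to reduce the statement to the classical one--dimensional Taylor formula applied along the one--parameter subgroup $t\mapsto y_{(t)}$. The key observation is that $y_{(t)}=\exp\big(t\sum_j y_jX_j\big)$ is a one--parameter subgroup, so $y_{(t+s)}=y_{(s)}\,y_{(t)}$. Hence, setting $g(t):=f(y_{(t)}x)$ and writing $\widetilde Y:=\sum_{j=1}^n y_j\widetilde X_j$ for the right--invariant field determined by $\sum_j y_jX_j$, the definition of the right--invariant vector fields gives $g'(t)=(\widetilde Y f)(y_{(t)}x)$ and, inductively, $g^{(k)}(t)=(\widetilde Y^k f)(y_{(t)}x)$; here $f\in C^{M+1}(G)$ guarantees $g\in C^{M+1}[0,1]$. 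First I would apply the one--dimensional Taylor formula with integral remainder to $g$ on $[0,1]$, to order $\lceil M\rfloor$ (which equals $M$ under the normalization $v_1=1$), and evaluate at $t=1$ using $g(1)=f(yx)$ and $g^{(k)}(0)=(\widetilde Y^k f)(x)$. This produces a polynomial part $\sum_{k\le \lceil M\rfloor}\frac1{k!}(\widetilde Y^kf)(x)$ together with an integral remainder built from $(\widetilde Y^{\lceil M\rfloor+1}f)(y_{(t)}x)$.

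Next I would expand each power $\widetilde Y^k=\big(\sum_j y_j\widetilde X_j\big)^k$ and reorganize the resulting non--commutative monomials $\widetilde X_{j_1}\cdots\widetilde X_{j_k}$ into the Poincar\'{e}--Birkhoff--Witt basis $\{\widetilde X^\alpha\}$. The decisive structural input is the gradation: since $[\mathfrak g_a,\mathfrak g_b]\subset\mathfrak g_{a+b}$, every bracket produced while reordering a monomial has the same homogeneous degree as the monomial itself. Consequently, the reordering of a term carrying the coefficient $y^\beta$ yields only fields $\widetilde X^\alpha$ with $[\alpha]=[\beta]$ and $|\alpha|\le|\beta|$. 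Collecting the contributions according to $\alpha$ then naturally groups the monomials $y^\beta$ with $[\beta]=[\alpha]$, exactly as in the two sums of the statement.

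It remains to sort the terms by homogeneous degree. Among the polynomial terms (coming from $k\le\lceil M\rfloor$, hence $|\alpha|\le|\beta|\le\lceil M\rfloor$), those with $[\beta]\le M$ form precisely the homogeneous--degree $\le M$ part; by the uniqueness of the homogeneous Taylor polynomial and its characterization through the matching of the right--invariant derivatives $\widetilde X^\alpha f(x)$ for $[\alpha]\le M$, this part equals $P^f_{x,M}(y)$ and is moved to the left--hand side. The remaining polynomial terms have $[\beta]\ge M+1$ with $|\alpha|\le\lceil M\rfloor$, giving the first sum. For the integral remainder the monomials have $|\beta|=\lceil M\rfloor+1>\lceil M\rfloor$, which forces $[\beta]\ge M+1$ (otherwise $|\beta|\le\lceil M\rfloor$ by the definition of $\lceil M\rfloor$), and reordering gives $\widetilde X^\alpha$ with $|\alpha|\le\lceil M\rfloor+1$ and $[\alpha]=[\beta]\ge M+1$; under $v_1=1$ the remainder kernel is $\frac{(1-t)^M}{M!}$, producing the second sum.

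I expect the main obstacle to be the degree bookkeeping rather than the analysis. Two points require care: verifying that the non--commutative reordering strictly preserves homogeneous degree (so that the index constraint $[\alpha]=[\beta]$ is exact), and correctly identifying the homogeneous--degree $\le M$ portion of the one--dimensional Taylor expansion with $P^f_{x,M}$. The fact that the classical order of differentiation in $t$ and the homogeneous degree $[\beta]$ need not coincide when some weight $v_j$ exceeds $1$ is exactly what the quantity $\lceil M\rfloor$ encodes, and it is the normalization $v_1=1$ (under which $\lceil M\rfloor=M$) that aligns the remainder kernel with the stated $\frac{(1-t)^M}{M!}$.
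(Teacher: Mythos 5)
The paper offers no proof of this lemma: it is imported directly from Bonfiglioli \cite{Bon}, so there is no internal argument to compare against. Your reconstruction is essentially the standard (and, as far as I can tell, Bonfiglioli's own) proof: the one-dimensional Taylor formula with integral remainder along the one-parameter subgroup $t\mapsto y_{(t)}$, using $y_{(t+s)}=y_{(s)}y_{(t)}$ to get $g^{(k)}(t)=(\widetilde Y^k f)(y_{(t)}x)$ with $\widetilde Y=\sum_j y_j\widetilde X_j$, followed by a Poincar\'e--Birkhoff--Witt reordering of $\widetilde Y^k$ in which the gradation guarantees that every commutator produced has the same homogeneous degree as the monomial it came from, so the constraints $[\alpha]=[\beta]$ and $|\alpha|\le|\beta|$ are exact. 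The bookkeeping for the remainder is also right: $|\beta|=\lceil M\rfloor+1$ forces $[\beta]\ge M+1$ by the definition of $\lceil M\rfloor$. This is a sound skeleton.

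The one step you assert rather than prove is the identification of the homogeneous-degree-$\le M$ part $Q(y)$ of the polynomial terms with $P^f_{x,M}(y)$. Invoking ``uniqueness plus matching of the right-invariant derivatives'' presupposes that you can compute $\widetilde X^\gamma_y\big(y^\beta\big)\big|_{y=e}$ against the group law, which is a genuine calculation, not a formality. A cleaner way to close the gap from what you already have: every term your formula leaves on the right-hand side carries a coefficient polynomial of homogeneous degree $\ge M+1$ and a factor bounded for $y$ near $e$ (continuity of $\widetilde X^\alpha f$ for $|\alpha|\le\lceil M\rfloor+1$ uses exactly $f\in C^{M+1}$), so $f(yx)-Q(y)=O(|y|^{M+1})$; the Folland--Stein Taylor inequality gives $f(yx)-P^f_{x,M}(y)=o(|y|^{M})$; hence $Q-P^f_{x,M}\in\mathcal P_M$ is $o(|y|^{M})$ near $e$, and evaluating along rays $y=\delta_r\omega$ kills each homogeneous component in turn, so $Q=P^f_{x,M}$. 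Finally, note that when $v_1>1$ your argument yields the kernel $(1-t)^{\lceil M\rfloor}/\lceil M\rfloor!$ rather than $(1-t)^{M}/M!$; this discrepancy is harmless for the application (only boundedness of the kernel on $[0,1]$ is used) and is consistent with the paper's own remark that Bonfiglioli assumes $v_1=1$.
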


\begin{lemma} \label{ker42}
Suppose that $\sigma = \{\sigma(\pi), \pi \in \widehat{G}\}$ is a measurable field of operators, $\mathcal{R}$ is positive
Rockland operator (of homogeneous degree $\nu$), and $N$ is an integer,
all of which satisfy the hypothesis of Theorem \ref{main1}. Let $\varphi \in \mathcal{S}(\mathbb{R})$ such that
$\supp \varphi \subset [2^{-\nu}, 2^{\nu}]$ and
\[
\sum_{j \in \mathbb{Z}}\varphi (2^{-\nu j}\lambda) =1 \quad \forall \lambda  \in (0, \infty).
\]
Let $\sigma_j(\pi):= \sigma(2^j \cdot \pi)\varphi(\pi(\mathcal{R}))$ and $K_j := \mathcal{F}_G^{-1} \sigma_j$
for $j \in \mathbb{Z}$. Then
for any $\alpha \in \mathbb{N}_0^n$, there exits a constant $C$ (depending on $\alpha$) such that
\begin{equation} \label{kerest}
\int_G (1 + |x|)^{2N} |\widetilde{X}^\alpha K_j(x)|^2d\mu(x) \leq C.
\end{equation}
\end{lemma}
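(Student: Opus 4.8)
The plan is to transfer everything to the dual $\widehat{G}$ and to control, uniformly in $j$, a finite family of weighted $L^2$-norms of $\widetilde{X}^\alpha K_j$ by means of the Plancherel theorem and the calculus of difference operators. First I would \emph{polynomialise the weight}. Since $N$ is divisible by each $v_1,\dots,v_n$, it is divisible by their least common multiple $\kappa$, so $m:=N/\kappa\in\mathbb{N}$; using $|x|\sim|x|_\kappa$ from \eqref{xkappa} and the expansion
\[
|x|_\kappa^{2N}=\Big(\sum_{j=1}^n x_j^{2\kappa/v_j}\Big)^{m}=\sum_{\beta}c_\beta\,\bigl|x^{\gamma(\beta)}\bigr|^2,\qquad c_\beta\ge 0,\ [\gamma(\beta)]=N,
\]
in which every summand is a perfect square of a monomial of homogeneous degree $N$, together with $(1+|x|)^{2N}\lesssim 1+|x|^{2N}$, reduces \eqref{kerest} to the uniform bound $\|\widetilde{X}^\alpha K_j\|_{L^2}^2+\sum_{[\gamma]=N}\|q_\gamma\widetilde{X}^\alpha K_j\|_{L^2}^2\le C$, where $q_\gamma(x)=x^\gamma$. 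By the standard identity $\mathcal{F}_G(\widetilde{X}^\alpha h)(\pi)=\widehat{h}(\pi)\,\pi(X^\alpha)$ (right multiplication by the image under the infinitesimal representation of $X^\alpha\in\mathfrak{U}(\mathfrak{g})$, homogeneous of degree $[\alpha]$), the relation $\mathcal{F}_G(q_\gamma g)=\Delta^\gamma\mathcal{F}_G g$, and Plancherel, this becomes
\[
\|q_\gamma\widetilde{X}^\alpha K_j\|_{L^2(G)}^2=\int_{\widehat{G}}\bigl\|\Delta^\gamma\bigl(\sigma_j(\pi)\,\pi(X^\alpha)\bigr)\bigr\|_{HS(\mathcal{H}_\pi)}^2\,d\widehat{\mu}(\pi).
\]

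Next I would expand via the Leibniz rule \eqref{leibniz}. Writing $\sigma_j=\sigma(2^j\cdot)\,\varphi(\pi(\mathcal{R}))$, the integrand splits into finitely many terms $\Delta^{\gamma_1}[\sigma(2^j\cdot)]\,\Delta^{\gamma_2}[\varphi(\cdot(\mathcal{R}))]\,\Delta^{\gamma_3}[\pi(X^\alpha)]$ with $\gamma_1+\gamma_2+\gamma_3=\gamma$. The scaling identities $\Delta^{\gamma_1}[\sigma(2^j\cdot)](\pi)=2^{j[\gamma_1]}(\Delta^{\gamma_1}\sigma)(2^j\cdot\pi)$ and $(2^j\cdot\pi)(\mathcal{R})=2^{j\nu}\pi(\mathcal{R})$, combined with the hypothesis $\sup_\varpi\|\Delta^{\gamma_1}\sigma(\varpi)\,\varpi(\mathcal{R})^{[\gamma_1]/\nu}\|\le C$ evaluated at $\varpi=2^j\cdot\pi$, give
\[
\bigl\|\Delta^{\gamma_1}[\sigma(2^j\cdot)](\pi)\,\pi(\mathcal{R})^{[\gamma_1]/\nu}\bigr\|_{\mathscr{L}(\mathcal{H}_\pi)}\le C
\]
with all factors $2^{j[\gamma_1]}$ cancelling against $2^{-j[\gamma_1]}$; this cancellation is the \emph{only} source of $j$-uniformity. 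Estimating $\|AB\|_{HS}\le\|A\,\pi(\mathcal{R})^{[\gamma_1]/\nu}\|_{\mathscr{L}(\mathcal{H}_\pi)}\,\|\pi(\mathcal{R})^{-[\gamma_1]/\nu}B\|_{HS}$ then reduces the whole problem to showing that the $j$-independent model symbols $\pi(\mathcal{R})^{-[\gamma_1]/\nu}\,\Delta^{\gamma_2}[\varphi(\cdot(\mathcal{R}))](\pi)\,\Delta^{\gamma_3}[\pi(X^\alpha)]$ lie in $L^2(\widehat{G})$, with norms bounded by a constant.

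The hard part will be precisely these model symbols. When $\gamma_2=\gamma_3=0$ the symbol equals $\pi(\mathcal{R})^{-[\gamma]/\nu}\varphi(\pi(\mathcal{R}))\,\pi(X^\alpha)=\mathcal{F}_G(\widetilde{X}^\alpha K_{\psi(\mathcal{R})})$ with $\psi(\lambda)=\lambda^{-[\gamma]/\nu}\varphi(\lambda)$ a Schwartz function supported in $[2^{-\nu},2^\nu]$, so its kernel is Schwartz by Hulanicki's theorem (Corollary \ref{corohul}) and the norm is finite. The obstacle is the terms with $\gamma_2\neq 0$ (or $\gamma_3\neq 0$): there the cutoff $\varphi$ has been consumed by a difference operator, so the \emph{bare} negative power $\pi(\mathcal{R})^{-[\gamma_1]/\nu}$ is no longer spectrally localised, and the associated kernel is a convolution of a Schwartz function $q_\delta\,\widetilde{X}^\beta K_{\varphi(\mathcal{R})}$ with the Riesz kernel of $\mathcal{R}^{-[\gamma_1]/\nu}$, which is homogeneous of degree $[\gamma_1]-Q$ and decays only slowly; a crude estimate fails to be $L^2$ once $[\gamma_1]\ge Q/2$, which does occur here. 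What rescues the argument is that $\varphi$ vanishes to infinite order at $0$, so $K_{\varphi(\mathcal{R})}$, and hence each $q_\delta\,\widetilde{X}^\beta K_{\varphi(\mathcal{R})}$, has \emph{all} polynomial moments equal to zero. Convolving such a function with the homogeneous Riesz kernel then yields rapid decay at infinity: one Taylor-expands the kernel and cancels the moments through Bonfiglioli's formula (Lemma \ref{TI}), obtaining the required $L^2$ bound with a constant controlled by finitely many Schwartz seminorms of $K_{\varphi(\mathcal{R})}$, hence independent of $j$. I expect this moment-cancellation step, forced on us by the use of homogeneous (rather than inhomogeneous) Rockland powers, to be the crux of the proof.
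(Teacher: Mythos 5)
Your proposal is correct in substance and shares the skeleton of the paper's argument: polynomialise the weight using the divisibility of $N$ by $\kappa$, pass to $\widehat{G}$ by Plancherel, expand by the Leibniz rule \eqref{leibniz}, insert $\pi(\mathcal{R})^{\pm[\gamma_1]/\nu}$ so that the factor containing $\sigma$ is controlled in $L^\infty(\widehat{G})$ by the Mihlin hypothesis after the scaling cancellation $2^{j[\gamma_1]}\cdot 2^{-j[\gamma_1]}$, and reduce everything else to a $j$-independent $L^2(\widehat{G})$ bound. The one genuine divergence is the treatment of $\widetilde{X}^\alpha$. The paper disposes of it before reaching the dual side: since $\supp\varphi\subset[2^{-\nu},2^{\nu}]$, one may pick $\psi\in\mathcal{S}(\mathbb{R})$ equal to $1$ there, so that $K_j=\Psi\ast K_j$ with $\Psi=K_{\psi(\mathcal{R})}\in\mathcal{S}(G)$ by Hulanicki's theorem; then $\widetilde{X}^\alpha K_j=(\widetilde{X}^\alpha\Psi)\ast K_j$, and splitting the weight by the quasi-triangle inequality plus Young's inequality reduces \eqref{kerest} to the case $\alpha=0$, so no $\pi(X^\alpha)$ factor ever appears. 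You instead carry the derivative to the Fourier side as a right factor (note the order reversal: $\mathcal{F}_G(\widetilde{X}^\alpha h)(\pi)=\widehat{h}(\pi)\,\pi(X_n)^{\alpha_n}\cdots\pi(X_1)^{\alpha_1}$, not $\widehat{h}(\pi)\pi(X^\alpha)$), which forces a three-fold Leibniz expansion and the heavier model symbols $\pi(\mathcal{R})^{-[\gamma_1]/\nu}\,\Delta^{\gamma_2}[\varphi(\pi(\mathcal{R}))]\,\Delta^{\gamma_3}[\pi(X^\alpha)]$; this is workable but costs extra bookkeeping that the convolution trick avoids. For the remaining $L^2(\widehat{G})$ estimate the paper simply quotes the proof of Proposition 4.9 of \cite{FR1}; your sketch via the vanishing of all moments of $q_{\gamma_2}K_{\varphi(\mathcal{R})}$ identifies the correct mechanism, but for $[\gamma_1]\geq Q$ the kernel of $\mathcal{R}^{-[\gamma_1]/\nu}$ is not a locally integrable homogeneous function, so the argument is more safely run through a dyadic spectral decomposition $\sum_{l}\chi(2^{-\nu l}\pi(\mathcal{R}))$, gaining powers of $2^{l}$ from the moment cancellation on the low-frequency blocks, rather than by literally convolving with a homogeneous Riesz kernel.
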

\begin{proof}
Let $\psi \in \mathcal{S}(\mathbb{R})$ such that $\psi =1$ on $[2^{-\nu}, 2^{\nu}]$. Then $\varphi (\lambda) = \varphi(\lambda) \psi(\lambda)$
for all $\lambda \in \mathbb{R}$. Consequently
\begin{align*}
\sigma_j (\pi) = \sigma_j (\pi) \psi(\pi (\mathcal{R}) ).
\end{align*}
Letting $\Psi$ be the convolution kernel of $\psi(\mathcal{R})$, it follows that
\begin{equation*}
\widetilde{X}^\alpha  K_{j } (x) = \widetilde{X}^\alpha   \big( \Psi  \ast K_j \big) (x)
 =(\widetilde{X}^\alpha \Psi )  \ast K_{j}   (x),
\end{equation*}
where we used \eqref{decon}.
From the quasi-triangle inequality \eqref{quas} we have
\begin{align*}
(1+ |x|)^{N} \lesssim (1 + |xy^{-1}|)^{N} (1 + |y|)^{N},
\end{align*}
which yields
\begin{align*}
(1+ |x|)^{N} \big|(\widetilde{X}^\alpha \Psi)  \ast K_j   (x) \big|
\lesssim   \big[(1+|\cdot|)^{N} |\widetilde{X}^\alpha \Psi|\big]
 \ast \big[(1 + |\cdot|)^{N} |K_j|\big] (x)
\end{align*}
By Hulanicki's theorem (see Corollary \ref{corohul}), we have $\Psi \in \mathcal{S}(G)$, which implies
\[
(1+|\cdot|)^{N} |\widetilde{X}^\alpha \Psi| \in L^1 (G).
\]
Hence by Young's inequality,
\begin{equation} \label{408}
\begin{split}
&\int_G (1 + |x|)^{2N} |\widetilde{X}^\alpha K_j(x)|^2d\mu(x)  \\
& \hspace{2cm}\lesssim \left\|  \big[(1+|\cdot|)^{N} |\widetilde{X}^\alpha \Psi|\big]
 \ast \big[(1 + |\cdot|)^{N} |K_j|\big] \right\|_{L^2(G)}^2\\
 &\hspace{2cm}\lesssim  \int_{G} (1+ |x|)^{2N}|  K_{j}(x)|^2 d\mu (y)\\
& \hspace{2cm}\sim \int_{G} (1+ |x|_\kappa^{2\kappa})^{N/\kappa}|  K_{j}(x)|^2 d\mu (y),
\end{split}
\end{equation}
where $|\cdot|_\kappa$ is the homogeneous quasi-norm defined by \eqref{xkappa}.

Since $N$ is a common multiple of the dilation weights $v_1, \cdots, v_n$ and $\kappa$
is the smallest such common multiple,  $(1+ |x|_\kappa^{2\kappa})^{N/\kappa}$ of the form
\[
(1+ |x|_\kappa^{2\kappa})^{N/\kappa} = \sum_{[\beta] \leq N} c_\beta (x^\beta)^2.
\]
Inserting this into \eqref{408}, and using the Plancherel theorem and the Lebniz rule \eqref{leibniz}, we have
\begin{equation} \label{111111}
\begin{split}
&\int_G (1 + |x|)^{2N} |\widetilde{X}^\alpha K_j(x)|^2d\mu(x)    \\
 & \hspace{2cm}\lesssim   \sum_{[\beta] \leq N} \left\|\Delta^{\beta} \big[\sigma_{j}(\pi)
\varphi(\pi(\mathcal{R})) \big] \right\|_{L^2(\widehat{G})}  \\
& \hspace{2cm} \lesssim \sum_{[\beta'] + [\beta''] \leq N}
\left\|(\Delta^{\beta'} \sigma_{j})(\pi)  \Delta^{\beta''}\big[ \varphi(\pi(\mathcal{R}))\big]\right\|_{L^2(\widehat{G})}.
\end{split}
\end{equation}
Inserting the powers $\pi(\mathcal{R})^{[\beta'] /\nu}$, each term in the above sum
can be estimated as follows:
\begin{equation} \label{222222}
\begin{split}
&\left\|(\Delta^{\beta'} \sigma_{j})(\pi)  \Delta^{\beta''}\big[ \varphi(\pi(\mathcal{R}))\big]\right\|_{L^2(\widehat{G})} \\
&\hspace{2cm} \leq \left\|(\Delta^{\beta'} \sigma_{j})(\pi)  \pi(\mathcal{R})^{[\beta'] /\nu}
\right\|_{L^\infty (\widehat{G})}  \left\| \pi(\mathcal{R})^{-[\beta'] /\nu}\Delta^{\beta''}\big[\varphi(\pi(\mathcal{R}))\big]
\right\|_{L^2(\widehat{G})} \\
&\hspace{2cm}=: E_1 \cdot E_2.
\end{split}
\end{equation}

For the factor $E_1$, we have
\begin{equation} \label{333333}
\begin{split}
E_1 &=\sup_{\pi \in \widehat{G} }\left\|(\Delta^{\beta'} \sigma_{j})(\pi)
\pi (\mathcal{R})^{[\beta'] /\nu}  \right\|_{\mathscr{L}(\mathcal{H}_{\pi})} \\
&= \sup_{\pi \in \widehat{G}}\left\| 2^{j [\beta']} (\Delta^{\beta'} \sigma \big) (2^{j}\cdot\pi)
  \pi(\mathcal{R})^{[\beta'] /\nu}  \right\|_{\mathscr{L}(\mathcal{H}_{\pi})} \\
  &= \sup_{\pi \in \widehat{G}}\left\|   (\Delta^{\beta'} \sigma \big) (2^{j}\cdot\pi)
  \big[(2^j\cdot\pi)(\mathcal{R})\big]^{[\beta'] /\nu}  \right\|_{\mathscr{L}(\mathcal{H}_{\pi})} \\
   &= \sup_{\pi \in \widehat{G}}\left\|   (\Delta^{\beta'} \sigma \big) ( \pi)
  \big[\pi(\mathcal{R})\big]^{[\beta'] /\nu}  \right\|_{\mathscr{L}(\mathcal{H}_{\pi})} \\
  & \leq  C_{\beta'},
\end{split}
\end{equation}
where the last inequality follows from the Mihlin-type condition \eqref{imi1}.

Using Hulanicki's theorem (\ref{corohul}) and the fact that $\varphi$ vanishes
near the origin, one can show that (see the proof of Proposition 4.9 in \cite{FR1} for details)
\begin{equation} \label{444444}
\begin{split}
E_2 =  \left\| \pi(\mathcal{R})^{-[\beta'] /\nu}\Delta^{\beta''}\big[\varphi(\pi(\mathcal{R}))\big]
\right\|_{L^2(\widehat{G})} \leq C_{\beta', \beta''}.
\end{split}
\end{equation}

Combining \eqref{111111} through \eqref{444444} yields the desired estimate \eqref{kerest}.
\end{proof}

Now we give the proof of our main result.

\begin{proof}[Proof of Theorem \ref{main1}]
Let $0< p \leq 1$ and let $\sigma$, $\mathcal{R}$ and $N$ satisfy the hypothesis of Theorem \ref{main1}.
We fix an integer $M$ such that
\begin{align} \label{467}
M \geq \max \big\{[\alpha]: \alpha \in \mathbb{N}_0^n
\mbox{ with } [\alpha] \leq Q(1/p-1)\big\}
\end{align}
and
\begin{align} \label{4688}
\frac{Q}{2}+ (M+1) -N >0.
\end{align}
The condition \eqref{467} means that $(p,2,M)$ is an admissible triplet. Hence,
to prove that $T_\sigma$ is bounded from $H^p(G)$ to $L^p (G)$, by  Proposition \ref{atomicdec} it suffices to show that
there exists a constant $C$ such that for an arbitrary $(p,2,M)$-atom $a$,
\begin{align} \label{desdes}
\|T_\sigma a\|_{L^p (G)} \lesssim 1.
\end{align}

Suppose $a$ is a $(p,2,M)$-atom associated to a ball $B= B(x_0, r)$.
Since $T_\sigma$ commutes with left translations, we may assume without loss of generality
that $x_0 =e$, i.e., the ball $B$ is centered at the group identity $e$. Let $c$ be a sufficient large positive constant.
We write
\begin{align*}
\|T_\sigma a\|_{L^p (G)}^p
&= \int_{B(e, cr)}|T_\sigma a (x)|^p d\mu(x) +   \int_{B(e, cr)^c}|T_\sigma a (x)|^p d\mu(x) \\
&=:I_1 + I_2.
\end{align*}

First we estimate $I_1$. Indeed,
by the Plancherel theorem and H\"{o}lder's inequality,
\begin{align} \label{I1I1}
I_1 \lesssim  \|T_{\sigma} a\|_{L^2 (G)} ^{p}|B(e, c r)|^{1- \frac{p}{2}}
  \lesssim  \|\sigma\|_{L^{\infty}(\widehat{G})}^p \|a\|_{L^2 (G)} ^{p}|B(e, c r)|^{1- \frac{p}{2}}  \lesssim 1.
\end{align}

Next we estimate $I_2$. Choose a function $\varphi \in \mathcal{S}(\mathbb{R})$ such that
$\supp \varphi \subset [2^{-\nu}, 2^{\nu}]$ and
\begin{align*}
\sum_{j \in \mathbb{Z}}\varphi (2^{-\nu j}\lambda) =1 \quad \forall \lambda  \in (0, \infty).
\end{align*}
By the spectral theorem (recalling that
$0$ can be neglected in the spectral resolution),
\begin{align*}
a = \sum_{j \in \mathbb{Z}}\varphi (2^{-\nu j}\mathcal{R})  a \quad \mbox{in } L^2(G).
\end{align*}
Consequently (using the $L^2$-boundedness of $T_\sigma$) we have
\begin{align} \label{lpdecom}
T_\sigma a (x) =  \sum_{j \in \mathbb{Z}} T_\sigma \varphi (2^{-\nu j}\mathcal{R}) a (x), \quad \text{a.e. } x \in G.
\end{align}
We set
\begin{align*}
\sigma_j (\pi)& := \sigma(2^{j}\cdot \pi)\varphi(\pi(\mathcal{R})), \\
\widetilde{\sigma}_j (\pi) &:= \sigma_j (2^{-j}\cdot \pi) = \sigma(\pi)\varphi(2^{-j}\cdot \pi(\mathcal{R})), \\
K_{j}&:= \mathcal{F}_G^{-1}  \sigma_j ,\\
\widetilde{K}_{j}&:= \mathcal{F}_G^{-1}  \widetilde{\sigma}_j .
\end{align*}
Then \eqref{lpdecom} can be rewritten as
\begin{align*}
T_\sigma a (x) = \sum_{j \in \mathbb{Z}} T_{\widetilde{\sigma}_j } a(x)  =\sum_{j \in \mathbb{Z}}\int_{B(e,r)} \widetilde{K}_j
(y^{-1}x)a(y)d\mu(y)=\sum_{j \in \mathbb{Z}} F_j (x),
\end{align*}
where
\[
F_j(x) :  = \int_{B(e,r)} \widetilde{K}_{j}(y^{-1}x)a(y) d\mu(y).
\]
Using $(\sum_j u_j)^p \leq \sum_j |u_j|^p$ ($0< p \leq 1$) and H\"{o}lder's inequality, it follows that
\begin{equation} \label{1111}
\begin{split}
&\int_{B(e, cr)^{c}} |T_\sigma a (x)|^p d\mu(x)
=\int_{B(e, cr)^{c}} \left|\sum_{j \in \mathbb{Z}}F_j (x) \right|^{p} d\mu(x)  \\
&\hspace{1.5cm}\leq \sum_{j \in \mathbb{Z}}\int_{B(e,cr)^c} \left|F_j (x) \right|^p d\mu(x)  \\
&\hspace{1.5cm} = \sum_{j \in \mathbb{Z}}\int_{B(e,cr)^c}|x|^{-pN} |x|^{pN}\left|F_j (x) \right|^p d\mu(x) \\
&\hspace{1.5cm} \leq \sum_{j\in \mathbb{Z}}\left(\int_{B(e,cr)^c} |x|^{-  \frac{2pN}{2-p}} d\mu(x)\right)^{1-\frac{p}{2}}
\left(\int_{B(e,cr)^{c}} |x|^{2N }|F_j (x) |^2 d\mu(x)\right)^{\frac{p}{2}} \\
&\hspace{1.5cm} \sim \sum_{j \in \mathbb{Z}} r^{\frac{(2-p)Q}{2} - pN}\left(\int_{B(e,cr)^c} |x|^{2N }|F_j (x) |^2
d\mu(x)\right)^{\frac{p}{2}}.
\end{split}
\end{equation}
Here we also used the assumption that $N > Q(1/p -1/2)$, which implies $\frac{2pN}{2-p} > Q$ and hence the function $|\cdot|^{\frac{2pN}{2-N}}$
is integrable on $B(e,cr)^c$.

Let $j_0$ be the unique integer such that $2^{-j_0} \leq r <2^{-j_0 +1}$ (i.e., $r \sim 2^{-j_0}$).
To estimate the last integral in  \eqref{1111}, we shall consider two cases: $j \geq j_0$ and $j < j_0$.

{\bf Case 1:} $j \geq j_0$. Observe that  $|x| \sim |y^{-1}x|$ whenever $x \in B(e,cr)^c$ and $y \in B(e,r)$. Hence
for all $x \in B(e,cr)^c$,
\begin{align*}
|x|^{N}|F_j (x)|
& = |x|^{N}\left|\int_{B(e,r)} \widetilde{K}_j (y^{-1}x)a(y) d\mu(y)\right| \\
 & \sim \int_{B(e,r)}
  |y^{-1}x|^{N} \widetilde{K}_j (y^{-1}x) ||a(y)|d\mu(y) \\
&\leq \|a\|_{L^2(G)} \left(\int_{B(e,r)} |y^{-1}x|^{2N}| \widetilde{K}_j (y^{-1}x)|^2 d\mu(y) \right)^{\frac{1}{2}} \\
&\lesssim   |B(e,r)|^{\frac{1}{2} - \frac{1}{p}} \left(\int_{B(e,r)} |y^{-1}x|^{2N}| \widetilde{K}_j (y^{-1}x)|^2 d\mu(y) \right)^{\frac{1}{2}}.
\end{align*}
It follows by Fubini's theorem, the fact that $\widetilde{K}_j (x) = 2^{jQ}K_j(\delta_{2^j}x)$, and Lemma \ref{ker42}  that
 \begin{equation} \label{ecase1}
 \begin{split}
&\int_{B(e,cr)^{c}} |x|^{2N }|F_j (x) |^2 d\mu(x) \\
&\hspace{1cm} \lesssim  |B(e,r)|^{1 - \frac{2}{p}} \int_{B(e,cr)^{c}}
\left(\int_{B(e,r)} |y^{-1}x|^{2N}| \widetilde{K}_j (y^{-1}x)|^2 d\mu(y) \right) d\mu(x)  \\
&\hspace{1cm}=   |B(e,r)|^{1 - \frac{2}{p}} \int_{B(e,r)} \left(\int_{B(e,cr)^{c}}
  |y^{-1}x|^{2N}| \widetilde{K}_j (y^{-1}x)|^2   d\mu(x)\right)d\mu(y)   \\
  &\hspace{1cm}\leq   |B(e,r)|^{1 - \frac{2}{p}} \int_{B(e,r)} \left(\int_{G}
  |y^{-1}x|^{2N}| \widetilde{K}_j (y^{-1}x)|^2   d\mu(x)\right)d\mu(y)\\
    &\hspace{1cm}=  |B(e,r)|^{1 - \frac{2}{p}} \int_{B(e,r)} \left(\int_{G}
  |x|^{2N}| \widetilde{K}_j (x)|^2   d\mu(x)\right)d\mu(y)   \\
   &\hspace{1cm}=  |B(e,r)|^{2 - \frac{2}{p}} \int_{G}
  |x|^{2N}| \widetilde{K}_j (x)|^2   d\mu(x)\\
  & \hspace{1cm}=  |B(e,r)|^{2 - \frac{2}{p}}2^{j(Q-2N)} \int_{G}
  |x|^{2N} | K_j ( x)|^2   d\mu(x)\\
  &\hspace{1cm}\lesssim r^{(2-\frac{2}{p})Q}2^{j(Q -2N)}.
\end{split}
\end{equation}

{\bf Case 2:} $j < j_0$. Let $P^{\widetilde{K}_j}_{x, M}$ be the right Taylor polynomial of $\widetilde{K}_j$
at $x$ of homogeneous degree $M$. By the vanishing moments of $a$, we have, for each $x \in B(e,cr)^c$,
\begin{align} \label{66a}
 F_j (x)
 = \int_{B(e,r)} \left[\widetilde{K}_j (y^{-1}x) -P^{\widetilde{K}_j}_{x, M}(y^{-1})\right]a(y) d\mu(y)
\end{align}
Using Lemma \ref{TI} we can write
\begin{equation}\label{66b}
\begin{split}
\widetilde{K}_j (y^{-1}x) -P^{\widetilde{K}_j}_{x, M}(y^{-1}) &=
\sum_{\substack{|\alpha| \leq \lceil M \rfloor \\ [\alpha] \geq M +1}}  \widetilde{X}^\alpha
\widetilde{K}_j (x) P_\alpha (y)\\
& \quad + \sum_{\substack{|\alpha| \leq \lceil M\rfloor +1 \\ [\alpha] \geq M+1}}
P_\alpha'(y)\int_0^1 (\widetilde{X}^\alpha \widetilde{K}_j)\big((y^{-1})_{(t)} x\big) \frac{(1-t)^M}{M!}dt,
\end{split}
\end{equation}
where both $P_\alpha$ and $P_\alpha '$ are polynomials on $G$ of homogeneous degree $[\alpha]$, and
\[
(y^{-1})_{(t)}:= \exp \left(- \sum_{j=1}^n y_j X_j \right) \equiv (-ty_1, \cdots, -ty_n).
\]
Inserting \eqref{66b} into \eqref{66a}, and using the Cauchy-Schwarz inequality and the size condition of atoms, we have
\begin{align*}
|F_j(x)| &\leq \sum_{\substack{|\alpha| \leq \lceil M\rfloor  \\ [\alpha] \geq M+1}}
\int_{B(e,r)}  \left|\widetilde{X}^\alpha
\widetilde{K}_j (x) P_\alpha (y)a(y)\right| d\mu(y) \\
& \quad +\sum_{\substack{|\alpha| \leq \lceil M\rfloor +1 \\ [\alpha] \geq M+1}}
\int_{B(e,r)}  \left|P_\alpha'(y)\int_0^1 (\widetilde{X}^\alpha \widetilde{K}_j)\big((y^{-1})_{(t)} x\big) \frac{(1-t)^M}{M!}dt\right| |a(y)|d\mu(y) \\
& \lesssim \sum_{\substack{|\alpha| \leq \lceil M\rfloor  \\ [\alpha] \geq M+1}}r^{(\frac{1}{2}-\frac{1}{p})Q}
\left(\int_{B(e,r)}   \left|\widetilde{X}^\alpha
\widetilde{K}_j (x) P_\alpha (y)\right|^2 d\mu(y)\right)^{1/2} \\
& \quad +\sum_{\substack{|\alpha| \leq \lceil M\rfloor +1 \\ [\alpha] \geq M+1}}r^{(\frac{1}{2}-\frac{1}{p})Q}
\left(\int_{B(e,r)}  \left|\int_0^1 P_\alpha'(y)(\widetilde{X}^\alpha \widetilde{K}_j)\big((y^{-1})_{(t)} x\big) \frac{(1-t)^M}{M!}dt\right|^2 d\mu(y)\right)^{1/2} \\
& =: F^{(1)}_j(x) + F^{(2)}_j(x).
\end{align*}
Thus,
\begin{align*}
\int_{B(e,cr)^{c}} |x|^{2N }|F_j (x) |^2 d\mu(x) \lesssim \int_{B(e,cr)^{c}} |x|^{2N }|F_j^{(1)} (x) |^2 d\mu(x)
 + \int_{B(e,cr)^{c}} |x|^{2N }|F_j^{(2)} (x) |^2 d\mu(x).
\end{align*}

We first estimate $\int_{B(e,cr)^{c}} |x|^{2N }|F_j^{(2)} (x) |^2 d\mu(x)$. Indeed, by Minkowski's
inequality and Fubini's theorem,
{\small \begin{align*}
&\int_{B(e,cr)^{c}} |x|^{2N }|F_j^{(2)} (x) |^2 d\mu(x) \\
& \lesssim  \sum_{\substack{|\alpha| \leq \lceil M\rfloor +1 \\ [\alpha] \geq M+1}} r^{(1-\frac{2}{p})Q}
\int_{B(e,cr)^{c}} |x|^{2N }\left(\int_{B(e,r)}  \left|\int_0^1 P_\alpha'(y)(\widetilde{X}^\alpha \widetilde{K}_j)\big((y^{-1})_{(t)} x\big) \frac{(1-t)^M}{M!}dt\right|^2 d\mu(y)\right) d\mu(x)\\
& \leq  \sum_{\substack{|\alpha| \leq \lceil M\rfloor +1 \\ [\alpha] \geq M+1}} r^{(1-\frac{2}{p})Q}
\int_{B(e,cr)^{c}} |x|^{2N }\left[\int_0^1 \left(\int_{B(e,r)}  \left|P_\alpha'(y)(\widetilde{X}^\alpha \widetilde{K}_j)\big((y^{-1})_{(t)} x\big)
\right|^2 d\mu(y)\right)^{1/2}dt\right]^2 d\mu(x)\\
& \leq  \sum_{\substack{|\alpha| \leq \lceil M\rfloor +1 \\ [\alpha] \geq M+1}} r^{(1-\frac{2}{p})Q}
 \left[\int_0^1 \left(\int_{B(e,cr)^{c}}|x|^{2N } \int_{B(e,r)}  \left|P_\alpha'(y)(\widetilde{X}^\alpha \widetilde{K}_j)\big((y^{-1})_{(t)} x\big)
\right|^2 d\mu(y)d\mu(x)\right)^{1/2}dt\right]^2 \\
& =  \sum_{\substack{|\alpha| \leq \lceil M\rfloor +1 \\ [\alpha] \geq M+1}} r^{(1-\frac{2}{p})Q}
 \left[\int_0^1 \left(\int_{B(e,r)} |P_\alpha'(y)|^2\int_{B(e,cr)^c} |x|^{2N } \left|(\widetilde{X}^\alpha \widetilde{K}_j)\big((y^{-1})_{(t)} x\big)
\right|^2 d\mu(x)d\mu(y)\right)^{1/2}dt\right]^2 .
\end{align*}
Note} that if $y \in B(e,r)$, $x \in B(e,cr)^c$ and $t \in [0,1]$, then $|(y^{-1})_{(t)}x| \sim |x|$. Thus, for every $y \in B(e,r)$
and $t \in [0,1]$,
\begin{align*}
&\int_{B(e,cr)^c} |x|^{2N } \left|(\widetilde{X}^\alpha \widetilde{K}_j)\big((y^{-1})_{(t)} x\big)
\right|^2 d\mu(x) \\
&\quad \sim \int_{B(e,cr)^c} |(y^{-1})_{(t)}x|^{2N } \big|(\widetilde{X}^\alpha \widetilde{K}_j)\big((y^{-1})_{(t)} x\big)
\big|^2 d\mu(x)  \\
&\quad\leq \int_G  |x|^{2N } \big|(\widetilde{X}^\alpha \widetilde{K}_j)(x)
\big|^2 d\mu(x) \\
&\quad = 2^{j(Q + 2[\alpha] -2N)}\int_G  |x|^{2N } \big|(\widetilde{X}^\alpha {K}_j)(x)
\big|^2 d\mu(x) \\
& \quad \lesssim  2^{j(Q + 2[\alpha] -2N)},
\end{align*}
where for the last line we used Lemma \ref{ker42}.
Inserting this estimate yields
\begin{align*}
&\int_{B(e,cr)^{c}} |x|^{2N }|F_j^{(2)} (x) |^2 d\mu(x)  \\
& \quad\lesssim  \sum_{\substack{|\alpha| \leq \lceil M\rfloor +1 \\ [\alpha] \geq M+1}} r^{(1-\frac{2}{p})Q} 2^{j(Q + 2[\alpha] -2N)}
 \left[\int_0^1 \left(\int_{B(e,r)} |P_\alpha'(y)|^2d\mu(y)\right)^{1/2}dt\right]^2 \\
&\quad \leq  \sum_{\substack{|\alpha| \leq \lceil M\rfloor +1 \\ [\alpha] \geq M+1}} r^{(1-\frac{2}{p})Q}
 2^{j(Q + 2[\alpha] -2N)} \int_{B(e,r)} |y|^{2[\alpha]}d\mu(y) \\
&\quad \sim \sum_{\substack{|\alpha| \leq \lceil M\rfloor +1 \\ [\alpha] \geq M+1}} r^{(2-\frac{2}{p})Q + 2[\alpha]} 2^{j(Q + 2[\alpha] -2N)}.
\end{align*}

With a similar but easier argument, we get the analogous estimate
\begin{align*}
\int_{B(e,cr)^{c}} |x|^{2N }|F_j^{(1)} (x) |^2 d\mu(x)
\lesssim \sum_{\substack{|\alpha| \leq \lceil M\rfloor \\ [\alpha] \geq M+1}}  r^{(2-\frac{2}{p})Q + 2[\alpha]} 2^{j(Q + 2[\alpha] -2N)}.
\end{align*}
The details are left to the interested reader.

Therefore, we have
\begin{align} \label{ecase2}
\int_{B(e,cr)^{c}} |x|^{2N }|F_j  (x) |^2 d\mu(x)
\lesssim \sum_{\substack{|\alpha| \leq \lceil M\rfloor  +1\\ [\alpha] \geq M+1}}  r^{(2-\frac{2}{p})Q + 2[\alpha]} 2^{j(Q + 2[\alpha] -2N)}.
\end{align}

Inserting \eqref{ecase1} and \eqref{ecase2} into \eqref{1111}, and using that $r \sim 2^{-j_0}$, we obtain
\begin{align*}
&\int_{B(e, cr)^{c}} |T_\sigma a (x)|^p d\mu(x) \\
&\quad\lesssim  \sum_{j \in \mathbb{Z}} r^{\frac{(2-p)Q}{2} - pN}\left(\int_{B(e,cr)^c} |x|^{2N }|F_j (x) |^2
d\mu(x)\right)^{\frac{p}{2}} \\
&\quad\lesssim \sum_{j \geq j_0} r^{\frac{(2-p)Q}{2} - pN}\big[r^{(2-\frac{2}{p})Q}2^{j(Q -2N)}\big]^{\frac{p}{2}}
 +   \sum_{j < j_0}\sum_{\substack{|\alpha| \leq \lceil M\rfloor +1 \\ [\alpha] \geq M+1}}r^{\frac{(2-p)Q}{2} - pN}
 \big[ r^{(2-\frac{2}{p})Q + 2[\alpha]} 2^{j(Q + 2[\alpha] -2N)}\big]^{\frac{p}{2}} \\
&\quad \sim \sum_{j \geq j_0} 2^{(j-j_0)p(\frac{Q}{2}-N)}  +
 \sum_{j < j_0}\sum_{\substack{|\alpha| \leq \lceil M\rfloor +1 \\ [\alpha] \geq M+1}}2^{(j-j_0)p(\frac{Q}{2}+ [\alpha] -N)}.
\end{align*}
Since $N > Q(1/p -1/2) > Q/2$, we have
\[
\sum_{j \geq j_0} 2^{(j-j_0)p(Q/2-N)} \lesssim 1.
\]
Note that the index set $\mathcal{I}:= \{\alpha  \in \mathbb{N}_0^n :|\alpha| \leq \lceil M\rfloor +1, \ [\alpha] \geq M+1\}$ has finite elements.
Moreover, the condition \eqref{4688} implies that $\frac{Q}{2}+ [\alpha] -N >0$ whenever $\alpha \in \mathcal{I}$. Thus
\begin{align*}
\sum_{j < j_0}\sum_{\substack{|\alpha| \leq \lceil M\rfloor +1 \\ [\alpha] \geq M+1}}2^{(j-j_0)p(\frac{Q}{2}+ [\alpha] -N)}\lesssim 1.
\end{align*}
Therefore,
\begin{align} \label{esofI2}
I_2 =\int_{B(e, cr)^{c}} |T_\sigma a (x)|^p d\mu(x) \lesssim 1,
\end{align}

Combining \eqref{I1I1} and \eqref{esofI2}, we obtain
\[
\|T_\sigma a\|_{L^2(G)}\lesssim 1.
\]
This completes the proof of Theorem \ref{main1}.
\end{proof}

\section*{Acknowledgement}
The authors would like to thank the anonymous referee for the valuable
comments which have helped to improve the presentation of the manuscript.

\medskip


\begin{thebibliography}{99}

\bibitem{A}
G. Alexopoulos, \textit{Spectral multipliers on Lie groups of polynomial growth}, Proc. Amer.
Math. Soc. \textbf{120} (1994), 973--979.


\bibitem{Bagchi}
S. Bagchi, \textit{Fourier Multipliers on the Heisenberg groups revisited},   Studia Math.
\textbf{260} (2021), 241--272.


\bibitem{Bon}
A. Bonfiglioli, \textit{Taylor formula for homogenous groups and applications}, Math. Z. \textbf{262} (2009), 255--279.




\bibitem{CT}
A. P.  Calder\'{o}n and A. Torchinsky, \textit{Parabolic maximal functions associated with a distribution. II},
Advances in Math. \textbf{24} (1977), 101--171.




\bibitem{Christ}
M. Christ, \textit{$L^p$ bounds for spectral multipliers on nilpotent groups}, Trans. Amer. Math. Soc. \textbf{328} (1991), 73--81.


\bibitem{CW1}
R. Coifman and G. Weiss, \textit{Analyse harmonique non-commutative sur certains espaces homog\`{e}nes}, Lecture Notes
in Mathematics, Vol. 242 (1971), Springer-Verlag, Berlin-New York.


\bibitem{CW2}
R. Coifman and G. Weiss, \textit{Multiplier transformations of functions on $SU(2)$ and $\Sigma_2$},
Collection of articles dedicated to Alberto Gonz\'{a}lez Dom\'{i}nguez on his sixty-fifth birthday,
Rev. Un. Mat. Argentina \textbf{25} (1970/71), 145--166.



\bibitem{DM2}
L. De Michele and G. Mauceri, \textit{$L^p$ multipliers on the Heisenberg group},
Michigan Math. J. \textbf{26} (1979), 361--371.


\bibitem{Dix}
 J. Dixmier, \textit{$C^{\ast}$-algebras}, Translated from the French by Francis Jellett, North-Holland Mathematical Library,
Vol. 15 (1977), North-Holland Publishing Co., Amsterdam-New York-Oxford.


\bibitem{DOS}
X. T. Duong, E. M. Ouhabaz and A. Sikora, \textit{Plancherel-type estimates and sharp spectral multipliers},
J. Funct. Anal. \textbf{196} (2002), 443--485.



\bibitem{DY}
X. T. Duong and L. Yan, \textit{Spectral multipliers for Hardy spaces associated to non-negative self-adjoint operators
satisfying Davies-Gaffney estimates}, J. Math. Soc. Japan \textbf{63} (2011), 295--319.


\bibitem{Fischer}
V. Fischer, \textit{Differential structure on the dual of a compact Lie group}, J. Funct. Anal. \textbf{279} (2020),
art. ID: 108555.


\bibitem{FR1}
V. Fischer and M. Ruzhansky,
\textit{Fourier multipliers on graded Lie groups}, Colloq. Math. \textbf{165} (2021), 1--30.

\bibitem{FR}
V. Fischer and M. Ruzhansky, \textit{Quantization on nilpotent Lie groups}, Progress in Mathematics, Vol. 314 (2016),
Birkh\"{a}user Basel.


\bibitem{FR2}
V. Fischer and M. Ruzhansky, \textit{Sobolev spaces on graded Lie groups}, Ann. Inst. Fourier (Grenoble), \textbf{67} (2017),
1671--1723.

\bibitem{FS}
G. B. Folland and E. M. Stein, Hardy Spaces on Homogeneous Groups. Mathematical Notes
28, Princeton University Press, Princeton, 1982.



\bibitem{HZ}
W. Hebisch and J. Zienkiewicz, \textit{Multiplier theorem on generalized Heisenberg groups. II}, Colloq. Math.
\textbf{69} (1995), 29--36.


\bibitem{HU2}
G. Hu, \textit{Homogeneous Triebel-Lizorkin spaces on stratified Lie groups}, J. Funct. Spaces Appl., Vol. 2013, Art. ID 475103, 16pp.



\bibitem{HU}
G. Hu, \textit{Littlewood--Paley characterization of H\"{o}lder-Zygmund spaces on stratified Lie groups},
Czechoslovak Math. J. \textbf{69} (2019), 131--159.



\bibitem{Hulanicki}
A. Hulanicki, \textit{A functional calculus for Rockland operators on nilpotent Lie groups}, Studia Math. \textbf{78} (1984), 253--266.


\bibitem{HJL}
A. Hulanicki, J.W. Jenkins and J. Ludwig, \textit{Minimum eigenvalues for positive, Rockland operators}, Proc.
Amer. Math. Soc. \textbf{94} (1985), 718--720.


\bibitem{HS}
 A. Hulanicki and E. M. Stein, \textit{Marcinkiewicz multiplier theorem for stratified groups},
unpublished manuscript.

\bibitem{Hor}
L. H\"{o}rmander, \textit{Estimates for translation invariant operators in $L^p$ spaces}, Acta Math. \textbf{104} (1960), 93--140.

\bibitem{Lin}
C.-C. Lin, \textit{$L^p$ multiplies and their $H^1 \rightarrow L^1$ estimates on the Heisenberg group},
Rev. Mat. Iberoam. \textbf{11} (1995), 269--308.


\bibitem{MaMu1}
A. Martini and D. M\"{u}ller, \textit{Spectral multiplier theorems of Euclidean type on new
classes of $2$-step stratified groups}, Proc. Lond. Math. Soc. \textbf{109} (2014), 1229--1263.


\bibitem{MaMu2}
A. Martini and D. M\"{u}ller, \textit{Spectral multipliers on 2-step groups: topological versus homogeneous dimension},
Geom. Funct. Anal. \textbf{26} (2016), 680--702.


\bibitem{MaMe}
G. Mauceri and S. Meda, \textit{Vector-valued multipliers on stratified groups},
Rev. Mat. Iberoam. \textbf{6} (1990), 141--154.

\bibitem{MS}
D. M\"{u}ller and E. M. Stein, \textit{On spectral multipliers for Heisenberg and related groups},
J. Math. Pures Appl. \textbf{73} (1994), 413--440.



\bibitem {Rubin}
R.  Rubin, \textit{Multipliers on the rigid motions of the plane and their relations to multipliers on direct products},
Proc. Amer. Math. Soc. \textbf{59} (1976), 89--98.

\bibitem{RW1}
M. Ruzhansky and J. Wirth, \textit{On multipliers on compact Lie groups}, Funct. Anal. Appl. \textbf{47} (2013),
72--75.

\bibitem{RW2}
M. Ruzhansky and  J. Wirth, \textit{$L^p$
Fourier multipliers on compact Lie groups}, Math. Z. \textbf{280} (2015), 621--642.


\bibitem{St}
R. Strichartz, \textit{Multiplier transformations on compact Lie groups and algebras}, Trans. Amer. Math. Soc. \textbf{193}
(1974), 99--110.


\bibitem{Vr}
L. Vretare, \textit{On $L^p$ Fourier multipliers on a compact Lie-group}, Math. Scand. \textbf{35} (1974), 49--55.

\bibitem{We1}
N. Weiss, \textit{$L^p$ estimates for bi-invariant operators on compact Lie groups}, Amer. J. Math. \textbf{94} (1972), 103--118.


\end{thebibliography}
\end{document}